\documentclass[12pt,a4paper]{article}
\usepackage{amssymb}
\setlength {\textwidth}{16.3cm}
\addtolength{\topmargin}{-1.7cm}
\addtolength{\oddsidemargin}{-0.5cm}
\textheight22.5cm

\usepackage[latin1]{inputenc}
\usepackage[english]{babel}
\usepackage{amsmath}
\usepackage{amsthm}
\usepackage{verbatim}

\newtheorem{theorem}{Theorem}

\newtheorem{remark}{Remark}
\newtheorem{remarks}[remark]{Remarks}
\newtheorem{example}{Example}
\newtheorem{examples}[example]{Examples}
\newtheorem{proposition}{Proposition}
\newtheorem{definition}{Definition}
\newtheorem{corollary}{Corollary}

\bibliographystyle{plain}

\newcommand{\RR}{\mathbb{R}}

\newcommand{\cC}{\mathcal{C}}

\newcommand{\HM}{\mathrm{HM}}
\newcommand{\HCM}{\mathrm{HCM}}
\newcommand{\GGC}{\mathrm{GGC}}


\begin{document}
\title{A class of scale mixtures of
gamma(k)-distributions that are generalized gamma convolutions}
\author{Anita Behme\thanks{Technische Universit\"at
      M\"unchen, Center for Mathematical Sciences, Boltzmannstrasse 3, D-85748 Garching,  Germany, \texttt{behme@ma.tum.de}, phone: +49-89-289-17424, fax: +49-89-289-17435. }$\,$
 and Lennart Bondesson\thanks{Department of Mathematics and Mathematical Statistics, Ume\aa\ University, SE-90187 Ume\aa, Sweden, \texttt{lennart.bondesson$@$math.umu.se}.}}
\date{\today}
\maketitle

\vspace{-1cm}
\begin{abstract}
Let $ k >0 $ be an integer and  $ Y $ a standard Gamma$(k)$
distributed random variable. Let $ X $ be an independent positive
random variable with a density that is hyperbolically monotone
(HM) of order $ k.$   Then $Y\cdot X$ and $Y/X $ both have
distributions that are generalized gamma convolutions ($\GGC$s).
This result extends a result of Roynette et al. from 2009 who
treated the case $ k=1 $ but without use of the $\HM$-concept.
Applications in excursion theory of diffusions and in the theory
of exponential functionals of L\'evy processes are mentioned.
\end{abstract}

2010 {\sl Mathematics subject classification.} 60E05, 60E10 (primary), 60G17, 60G51, 60J60 (secondary)\\

{\sl Key words and phrases:} excursion theory, exponential
functionals, generalized gamma convolution, hyperbolic
monotonicity, products and ratios of independent random
variables, L\'evy process.

\section{Introduction}\label{secintro} 

A generalized gamma convolution ($\GGC$) is a limit distribution for
sums of independent gamma distributed random variables (rvs). The
$\GGC$s were introduced by the actuary O. Thorin in 1977 when he
tried to prove that the lognormal distribution is infinitely
divisible (see (\cite{Thorin})). He used a technique that later
on led the second author of this paper to introduce in
\cite{LB92} the concept of hyperbolic complete monotonicity
($\HCM$). The simpler concept of hyperbolic monotonicity (HM) was
mentioned in \cite[pp.\ 101-102]{LB92} and more carefully studied
in \cite{LB97}.

The $\GGC$s have got applications in many different fields including
infinite divisibility (e.g.\ Steutel and van Harn
\cite{SteutelHarn}), mathematical analysis (e.g.\ Schilling et
al.\ \cite{SchillingSongVondracek}), stochastic processes (e.g.\
James et al.\ \cite{James-et-al} and Behme et al.\
\cite{BehmeMaejimaMatsuiSakuma}), and financial mathematics (e.g.\
Barndorff-Nielsen et al.\ \cite{B-N}).

In 2009 Roynette et al.\ \cite{RoynetteValloisYor} proved a novel
$\GGC$ result that has provided stimulus to the present work. In our
terminology, they showed that the product of an exponentially
distributed rv $ Y $ and another independent rv $ X $ has a $\GGC$
distribution provided that the density of $ X $ is $\HM$. We will
give a new and more transparent proof of this result and
generalize it considerably to cover gamma distributions.

The paper is organized as follows. In Section~\ref{secbackground}
the $\HM$, $\HCM$ and $\GGC$ theory is briefly recalled. In
Section~\ref{secMainResult} the main result that the product of a
gamma variable with shape parameter $ k $ and an rv with $\HM_k$
distribution has a $\GGC$ distribution is given. This result can be
formulated in several alternative ways. It has also an  important
extension.  The proof contains some surprising elements.
Applications, analytical as well as stochastic process related ones,
are given in Section~\ref{secappl}. Finally, in
Section~\ref{secfinal} some open problems are mentioned.

\section{Background} \label{secbackground}

Basic facts on hyperbolic monotonicity
(HM) and generalized gamma convolutions ($\GGC$s) are presented here.
They are taken from Bondesson \cite{LB92, LB97}. Much information about
$\GGC$s and hyperbolic complete monotonicity ($\HCM$) can also be found
in the book by Steutel and van Harn \cite{SteutelHarn}.

\subsection{Hyperbolic monotonicity}\label{secHCM}

Let $ f $ be a nonnegative function on $(0,\infty)$. Consider,
for any fixed $ u>0,$ the function $f(uv)f(u/v)$, $v>0$.
Obviously it is invariant under the transformation $v \mapsto
v^{-1}. $ It follows that it is a function $h(w)$ of $w=v +
v^{-1} $ since the value of $ w $ determines the set
$\{v,v^{-1}\}.$

 \begin{definition} \rm A nonnegative function $f$ on $(0, \infty)$ is said to be
 {\sl hyperbolically monotone} ($\HM$ or $\HM_1$) if, for each fixed $u>0, $ the
 function $ h(w) =f(uv)f(u/v) $ is non-increasing as a function of $
 w=v+v^{-1}.$ More generally, it  is called {\sl hyperbolically
 monotone of order $ k $} ($\HM_k$) if $ (-1)^j  h^{(j)}(w) \ge 0, \;
 j=0,1,\dots,k-1 $ and $ (-1)^{k-1}h^{(k-1)}(w) $ is non-increasing.
 If this holds for all $k \ge 1,$  $ f $ is also  called
 {\sl hyperbolically completely monotone} ($\HCM$).
\end{definition}

 The class of $\HM_k$-functions is also denoted $\HM_k$.
 Obviously $$ \HCM= \HM_\infty  \subset \dots
 \subset
  \HM_3 \subset \HM_2 \subset \HM_1 = \HM.
  $$ Simple examples of $\HCM$-functions are provided by (with $\gamma \in \mathbb
 R, c\ge 0):$
 $ x^\gamma, e^{-cx}, $ and $  e^{-c/x}.$ It is apparent that the $\HM_k$-class is closed with respect to
 multiplication of functions. For  $ f \in \textrm{HM}_k, $
 obviously $ f(uv)f(u/v) \le (f(u))^2.$  It easily follows that $ \log f(e^x) $
 is concave and hence that $ f(x) \le C x^\gamma $ for some
 constants $ C\ge 0 $ and $\gamma \in \mathbb R $ (depending on $
 f).$
 Every $\HM_k$-function $ f $ can therefore
 be modified to an $\HM_k$ probability density function (pdf) by
 multiplication by a factor $ \exp(-\delta_1 x-\delta_2 x^{-1})$
 (with $\delta_1 >0 $ and $ \delta_2 >0 $
 arbitrarily small) and
 a normalizing constant. In this paper we are mainly concerned
 with pdfs.

\begin{example} \rm Let $ f $ be a pdf on $ (a,b) \subset (0,\infty)
$ of the form $ f(x)= C (x-a)^{\alpha-1}(b-x)^{\beta -1},$ where $
C $ is a constant. It can be shown  that for $ \alpha \ge 1 $ and
$ \beta \ge 1,  f $ is $\HM_k$ for $ k= \min([\alpha],[\beta]),$
where $[\cdot] $ denotes integer part. However, if $ a=0,$ then
for any value of $\alpha,\, f $ is $\HM_k $ for $ k= [\beta].$ In
particular, the $U(a,b)$ density is $\HM_1.$ In fact, in this case
it is easy to see that $ h(w) = f(uv)f(u/v) $ is 0 for all $u$
sufficiently large or small and that for the other values of $ u,
\, h(w)$ equals  1 if $ w $ is below some bound and otherwise 0.
$ \quad \Box$
\end{example}

\begin{example}\label{exampleuniform}
\rm Let $ X = U_1U_2\cdots U_k, $ where the random variables
(rvs) $U_i$ are independent and uniformly distributed on $(0,1)$.
Since $ - \log X $ has a $\mathrm{Gamma}(k,1)$-distribution,  $
f_X(x)=\frac{1}{(k-1)!}(-\log x)^{k-1}, 0<x<1. $ This pdf is
$\HM_k.$ In fact, $ h(w)=f(uv)f(u/v) =0 $ for $ u \ge 1,$ whereas,
for $ u<1,$  $ h(w) \propto ((\log u)^2- (\log v)^2))^{k-1} $ if
$ u<v<u^{-1} $ (i.e.\ if $ 2\le w < u+u^{-1}$) and otherwise
vanishes. The $\HM_k$ result then follows from the fact that $ d
(\log v)^2/dw $ is completely monotone (CM). In fact, this
derivative can be shown to be equal to $ \int_0^\infty
(1+t^2+tw)^{-1}dt. \qquad \Box $
\end{example}


The following result, which concerns powers, products and ratios
of rvs, is important. Its proof (in \cite{LB97}) is far from
trivial. A main idea in the proof is to use hyperbolic
substitutions of the form $ x=uv, y=u/v $ in certain double
integrals.

 \begin{proposition}\label{prop:HMkproducts} Let $ X $ and $ Y $ be independent rvs with $\HM_k$-densities $(X \sim \HM_k, Y \sim \HM_k$).
 Then, for any $ q \in \mathbb R $ with $|q|\ge 1,$ we have $ X^q
 \sim \HM_k$. Moreover, $ X \!\cdot\! Y  \sim \HM_k $ and
 $ X/Y \sim \HM_k$.
\end{proposition}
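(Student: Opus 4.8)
The plan is to reduce the three claims to a single computation about the kernel $h(w)$ attached to a product density, exploiting the hyperbolic substitution highlighted in the remark preceding the statement. First I would dispose of the power statement $X^q \sim \HM_k$ for $|q|\ge 1$. Writing $g$ for the density of $X^q$, one has $g(x) \propto x^{1/q-1} f_X(x^{1/q})$, and the extra factor $x^{1/q-1}$ is $\HM_\infty$ (a pure power), so by the multiplicativity of $\HM_k$ already noted in the text it suffices to show $f_X(x^{1/q}) \in \HM_k$. For this I would fix $u>0$, set $H(w) = f_X((uv)^{1/q}) f_X((u/v)^{1/q})$, and observe that $(uv)^{1/q} = \tilde u \tilde v$ and $(u/v)^{1/q} = \tilde u/\tilde v$ with $\tilde u = u^{1/q}$, $\tilde v = v^{1/q}$; hence $H(w) = h_{f_X}(\tilde w)$ where $\tilde w = \tilde v + \tilde v^{-1}$. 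The point is that $w \mapsto \tilde w$, i.e. $v+v^{-1} \mapsto v^{1/q}+v^{-1/q}$, is a map of $[2,\infty)$ to $[2,\infty)$ whose inverse is increasing with completely monotone derivative when $|q|\ge 1$ (this is where $|q|\ge 1$ enters — for $|q|<1$ the chain rule destroys complete monotonicity of the higher derivatives). Composing the $\HM_k$ kernel $h_{f_X}$ with such a map preserves the sign pattern $(-1)^j(\cdot)^{(j)}\ge 0$ for $j\le k-1$ and the monotonicity of the $(k-1)$st derivative, by Fa\`a di Bruno / the fact that a CM function of a function with CM derivative stays CM. I expect the $q$-power claim to be the routine part.

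The heart of the matter is $X\cdot Y \sim \HM_k$; the ratio then follows since $X/Y = X\cdot(Y^{-1})$ and $Y^{-1}\sim\HM_k$ by the power statement with $q=-1$. Let $f$ and $g$ be the $\HM_k$ densities of $X$ and $Y$, and let $p$ denote the density of $X\cdot Y$, so that $p(x) = \int_0^\infty f(t)\, g(x/t)\, \frac{dt}{t}$. Fix $u>0$ and consider $P(w) := p(uv)\,p(u/v)$ as a function of $w = v+v^{-1}$. Substituting the two integral representations and then making, inside the resulting double integral over $(s,t)\in(0,\infty)^2$, the hyperbolic change of variables $s = u a/b$, $t = u b/a$ (so $st = u^2$, $s/t = (a/b)^2$) adapted to the structure $p(uv)p(u/v) = \iint f(s)g(uv/s)\,f(t)g(u/(vt))\,\frac{ds\,dt}{st}$, I would aim to write

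$$ P(w) = \int\!\!\int_{(0,\infty)^2} \big[f(ua/b)f(ub/a)\big]\,\big[g(\tfrac{b}{a}\cdot\tfrac{v}{1}\cdot ?)\,g(\cdots)\big]\, d\mu(a,b), $$

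where — and this is the surprising element the authors allude to — after the substitution the two $g$-factors recombine into the kernel $h_g$ of $g$ evaluated at an argument depending on $w$ and on the integration variables only through $a/b$, while the two $f$-factors give $h_f$ evaluated at $(a/b)+(b/a)$, which is independent of $w$. Concretely, the $v$-dependence should collapse so that $P(w) = \int_2^\infty h_g\big(\text{affine-in-}w\big)\, d\nu(\text{that other }w')$, i.e. $P$ is a mixture over a measure not involving $w$ of translates/dilates of the single-variable $\HM_k$ kernel $h_g$. Since each such translate/dilate of $h_g$ has the required derivative sign pattern in $w$, and mixing with a positive measure preserves it, we conclude $p\in\HM_k$.

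The main obstacle — and the step I would budget the most care for — is verifying that the hyperbolic substitution really does decouple the two density factors so cleanly: one must check that the Jacobian and the domain of integration transform so that the $w$-dependence is purely an affine reparametrization of the argument of $h_g$ (with positive coefficients, uniformly over the auxiliary variables), and that no residual $w$-dependence leaks into the integrating measure. This is exactly the delicate bookkeeping that makes the proof in \cite{LB97} ``far from trivial''; the symmetry $v\mapsto v^{-1}$ of $P$ guarantees it is \emph{some} function of $w$, but one needs the explicit substitution to see it is a \emph{monotone/CM-preserving} function of $w$. A secondary technical point is handling convergence and the differentiation under the integral sign needed to pass the derivative-sign conditions $(-1)^j h^{(j)}\ge 0$ from $h_g$ to $P$; here the a priori polynomial bound $f(x)\le Cx^\gamma$ (and the analogous bound for $g$) recorded in the background section should control the integrals. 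Once $X\cdot Y\sim\HM_k$ is established, the ratio statement is immediate as noted, completing the proof. \qedem
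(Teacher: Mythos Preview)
The paper does not itself prove this proposition; it cites \cite{LB97} and only records that ``a main idea in the proof is to use hyperbolic substitutions of the form $x=uv,\ y=u/v$ in certain double integrals.'' Your plan is in the spirit of that hint, and your reduction of the ratio to the product via $q=-1$ is fine. However, two concrete problems would block your product argument as written. First, the substitution $s=ua/b,\ t=ub/a$ is degenerate: it forces $st=u^2$ and so parametrizes only a curve, not all of $(0,\infty)^2$. The genuine two--parameter hyperbolic substitution is $s=\alpha\beta,\ t=\alpha/\beta$ (Jacobian $2\alpha/\beta$), after which the $f$--factors combine to $h_{f,\alpha}(\beta+\beta^{-1})$ and the $g$--factors to $g\!\big((u/\alpha)(v/\beta)\big)\,g\!\big((u/\alpha)/(v/\beta)\big)=h_{g,u/\alpha}\!\big(v/\beta+\beta/v\big)$.

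Second --- and this is the real obstacle --- the argument $v/\beta+\beta/v$ is \emph{not} affine in $w=v+v^{-1}$ for fixed $\beta$; it is not even a function of $w$ alone. What one does have is that the unordered pair $\{v/\beta+\beta/v,\ v\beta+(v\beta)^{-1}\}$ is determined by $\big(w,\beta+\beta^{-1}\big)$, since its sum equals $w(\beta+\beta^{-1})$ and its product equals $w^2+(\beta+\beta^{-1})^2-4$. Thus after the symmetrization $\beta\mapsto\beta^{-1}$ the inner integral is indeed a function of $w$; but extracting $k$--monotonicity from this symmetric expression is precisely the delicate step carried out in \cite{LB97} via the representation \eqref{repr}, and it does not reduce to a ``mixture of affine reparametrizations of $h_g$'' as you anticipate. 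Your sketch for the power claim is closer to correct, though the Bernstein--type condition should be stated for $\tilde w$ as a function of $w$ (you want $d\tilde w/dw$ to have the right sign pattern), not for its inverse.
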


A simple consequence of Proposition \ref{prop:HMkproducts} (with
one of the rvs exponentially distributed) is that the Laplace
transform of an $\HM_k$ function is $\HM_k.$ Let $ X $ have the
$\HM_2$-density $ f(x) =2\max(0,1-x) $ and let $Y \! \sim \!
U(0,1)$  (with an $\HM_1$-density). Then it can be shown that $
X/Y \not \sim \HM_2.$ Thus there is no trivial extension of
Proposition \ref{prop:HMkproducts}.

\medskip
The $\HM_1$-densities can be identified as follows (see \cite{LB97}).

\begin{proposition} \label{prop:HM1logconcave} We have $ X \sim \HM_1$ if and only if
$ Y=\log X $ has a pdf that is logconcave, i.e. $\log f_Y(y) $ is
concave. Equivalently, $ X \sim \HM_1$ if and only if  $ f_X(x)=
C \exp(-\int_{x_0}^x \frac{\psi(y)}{y}dy ),$ where $ \psi $ is
non-decreasing,  $ C $ a constant, and $ x_0 $ is suitable chosen.
\end{proposition}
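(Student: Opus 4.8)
The plan is to move everything to the logarithmic scale, where the hyperbolic symmetry $v \mapsto v^{-1}$ becomes the ordinary reflection $t \mapsto -t$. Put $Y = \log X$, so that $f_Y(y) = e^y f_X(e^y)$ and hence $f_X(e^y) = e^{-y} f_Y(y)$. Fix $u = e^s$ and write $v = e^t$ with $t \ge 0$ (it suffices to take $v \ge 1$, since $h$ is already symmetric under $v \mapsto v^{-1}$); then $w = v + v^{-1} = 2\cosh t$ is a strictly increasing bijection of $[0,\infty)$ onto $[2,\infty)$, and
\[
h(w) = f_X(uv)f_X(u/v) = f_X(e^{s+t})f_X(e^{s-t}) = e^{-2s}\,f_Y(s+t)\,f_Y(s-t).
\]
Because $e^{-2s}$ is a positive constant once $u$ is fixed, ``$h$ is non-increasing in $w$'' is equivalent to ``$t \mapsto f_Y(s+t)f_Y(s-t)$ is non-increasing on $[0,\infty)$'', and $X \sim \HM_1$ says this holds for every $s \in \RR$. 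It therefore remains to prove that this family of one-dimensional monotonicity conditions is equivalent to logconcavity of $f_Y$.

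For the ``if'' direction, let $\phi := \log f_Y$ be concave, with the usual convention $\phi = -\infty$ off the support interval $I$. If $s \notin I$, then for every $t \ge 0$ at least one of $s \pm t$ lies outside $I$ (otherwise $I$ would contain the midpoint $s$), so $f_Y(s+t)f_Y(s-t) \equiv 0$. If $s \in I$, then on the set of $t \ge 0$ for which both $s \pm t \in I$ the map $t \mapsto \phi(s+t) + \phi(s-t)$ is concave with non-positive right derivative at $t = 0$ (as $\phi'_+(s) \le \phi'_-(s)$), hence non-increasing, and it equals $-\infty$ thereafter; either way $t \mapsto f_Y(s+t)f_Y(s-t)$ is non-increasing. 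For the ``only if'' direction, comparing $t = 0$ with a general $t$ gives $f_Y(s)^2 \ge f_Y(s+t)f_Y(s-t)$ for all $s$ and all $t \ge 0$; taking $s = (y_1+y_2)/2$ this is exactly midpoint-logconcavity of $f_Y$, and it also forces $\{f_Y > 0\}$ to be stable under midpoints. Since $f_Y$ is measurable, Sierpi\'nski's theorem upgrades midpoint-convexity of $-\log f_Y$ to genuine convexity on its (essentially interval) support, so a version of $f_Y$ is logconcave.

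Finally, for the integral representation write a logconcave $f_Y$ as $f_Y(y) = e^{\phi(y)}$ with $\phi$ concave. Then $f_X(x) = f_Y(\log x)/x = \exp(\phi(\log x) - \log x)$, so that $(\log f_X)'(x) = (\phi'(\log x) - 1)/x = -\psi(x)/x$ for a.e.\ $x$, with $\psi(x) := 1 - \phi'(\log x)$; since $\phi'$ is non-increasing and $\log$ is increasing, $\psi$ is non-decreasing, and integrating from a suitable $x_0$ gives the stated form of $f_X$. Conversely, starting from $f_X(x) = C\exp(-\int_{x_0}^x \psi(y)\,y^{-1}\,dy)$ with $\psi$ non-decreasing, the substitution $y = e^u$ yields $\log f_Y(y) = c_0 + \int_{\log x_0}^{y}(1 - \psi(e^u))\,du$, whose integrand is non-increasing, so $f_Y$ is logconcave. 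The step requiring the most care is the ``only if'' direction: one must supply the measurability-based upgrade from midpoint to genuine convexity and handle the support and one-sided-differentiability technicalities that this sketch passes over.
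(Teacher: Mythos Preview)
The paper does not supply its own proof of this proposition; it is quoted from \cite{LB97}, and the only in-text hint is the earlier remark that $f(uv)f(u/v)\le (f(u))^2$ forces $\log f(e^x)$ to be concave. Your argument is correct and is precisely the natural one that this hint points to: the substitution $u=e^s$, $v=e^t$ converts the hyperbolic symmetry $v\mapsto v^{-1}$ into the reflection $t\mapsto -t$, and the $\HM_1$ condition becomes ``$t\mapsto f_Y(s+t)f_Y(s-t)$ is non-increasing on $[0,\infty)$ for every $s$''. Both directions of the logconcavity equivalence and the derivation of the integral form are handled correctly; your explicit invocation of Sierpi\'nski's theorem to pass from midpoint to full logconcavity, and your treatment of the support boundary, supply details that the paper (and likely \cite{LB97}) leaves implicit.
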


With this, the well-known fact that logconcavity is preserved under
convolution (see e.g. \cite[pp. 17-23]{Dharmadhikari}) becomes a simple
consequence of Proposition~\ref{prop:HMkproducts} for $ k=1. $


\medskip
Typical $\HCM$ (=$\HM_\infty) $ pdfs have the form $ f(x) = C
x^{\beta-1}\prod_{i=1}^n (1+c_ix)^{-\gamma_i},$ where the
parameters are positive, or are limits of such densities. In fact,
all $\HCM$-densities (and functions) are such limits. An open
problem is to find canonical representations for $\HM_k$-densities
for $ 1 < k <\infty.$

\medskip
The $\HM_k$-class of densities (functions) can alternatively be
described by the condition that
\begin{equation} \label{repr}
h(w) = f(uv)f(u/v) = c_u + \int_{(w,\infty)} (\lambda-w)^{k-1}
H_u(d\lambda),
\end{equation}
where $ c_u \ge 0 $ and $ H_u(d\lambda) $ is a nonnegative
measure. The simple example $ f(x)=x^\gamma $ gives $c_u =
u^{2\gamma} $ and $ H_u(d\lambda)\equiv 0.$ However, for a pdf we
must have $ c_u=0.$  The representation (\ref{repr}) follows from
a representation of the non-increasing  function $
(-1)^{k-1}h^{(k-1)}(w) $ as an integral over $ (w,\infty)$ (or
possibly  $[w,\infty)$)  of a nonnegative measure. For instance,
for $ k=2 $ we put $-h'(w) =  \int {\bf 1} ( w
<\lambda)H_u(d\lambda). $ We then get, by a change of the order
of integration,  $$ h(w)- h(\infty)= - \! \int_w^\infty \!
h'(\tilde w)d\tilde w =   \int \! \! \int \! {\bf 1}(w< \tilde
w<\lambda)d\tilde w H_u(d\lambda) = \int_{(w,\infty)}\!\!\!\!\!
(\lambda-w)H_u(d\lambda).
$$

The  representation \eqref{repr} was derived and used in \cite{LB97}. For
functions with monotone derivatives up to some order it seems to
have been first used by Williamson \cite{Williamson}.

\subsection{Generalized gamma convolutions}\label{secGGC}

Convolving different gamma distributions, $\mathrm{Gamma}(u,t)$, with pdfs
and Laplace transforms (LTs) of the forms $ f(x)=
(\Gamma(u))^{-1}x^{u-1}t^u \exp(-xt) $ and $\phi(s)= (\frac{t}{t+s})^u, $
respectively, and then taking weak limits, Thorin \cite{Thorin} was led to
the following definition.

\begin{definition}\rm A {\sl generalized gamma convolution ($\GGC$)} is
a probability distribution on $ [0,\infty) $ with LT of the form
$$ \phi(s) = \exp\left(-as + \int_{(0,\infty)} \log\left(\frac{t}{t+s}\right)U(dt)\right),
$$ where (the left-extremity) $ a\ge 0 $ and $ U(dt) $ is a nonnegative measure on
$ (0,\infty)$ (with finite mass for any compact subset of $ (0,\infty)$) such that  $\int_{(0,1)} |\log \, t|U(dt) <\infty $ and $
\int_{(1,\infty)} t^{-1}U(dt) < \infty. $
\end{definition}

The $\GGC$-class of distributions is closed with respect to (wrt)
addition of independent random variables and wrt weak limits.
Each $\GGC$ is infinitely divisible and each convolution root of a
$\GGC$ is a $\GGC$ as well. The pdf $ f(x) $ of a $\GGC$ is strictly
positive on $(a,\infty)$ and, if $ a=0 $ and
$\beta=\int_{(0,\infty)} U(dt) $ is finite, then $ f(x)=
x^{\beta-1}h(x), $ where $ h(x) $ is completely monotone
(see \cite[p. 49]{LB92}).

\medskip
The pdf of a $\GGC$ need not be $\HM_1.$  For instance, for a gamma
distribution with shape parameter less than 1 and shifted to have
left-extremity $ a
>0 $ the pdf is not $\HM_1$. An $\HM_k$-density, which may have compact
support, is in general not a $\GGC$. However (see \cite[Theorem
5.1.2]{LB92}):

\begin{proposition} \label{prop:HCMinGGC} If the pdf $ f $ on $ (0,\infty) $ is
$\HCM$, then it is a $\GGC$. Thus $\HCM \subset \GGC$.
\end{proposition}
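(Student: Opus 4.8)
The plan is to reduce to the canonical generators of the $\HCM$ class and then pass to a weak limit. By the remarks in Section~\ref{secHCM}, every $\HCM$ probability density $f$ is a limit (pointwise, and hence, by Scheff\'e's lemma, also weakly for the associated laws) of densities of the form
$$ f_n(x)=C_n\,x^{\beta_n-1}\prod_{i=1}^{m_n}(1+c_{n,i}x)^{-\gamma_{n,i}},\qquad \beta_n>0,\ c_{n,i}>0,\ \gamma_{n,i}>0,\ \textstyle\sum_i\gamma_{n,i}>\beta_n, $$
where the last inequality makes each $f_n$ a genuine pdf. Since the $\GGC$-class is closed under weak limits (Section~\ref{secGGC}), it suffices to prove the proposition for a single \emph{generic} density $f(x)=Cx^{\beta-1}\prod_{i=1}^m(1+c_ix)^{-\gamma_i}$.

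For such an $f$ I would compute the Laplace transform explicitly. Since $\prod_{i=1}^m(1+c_ix)^{-\gamma_i}$ is the Laplace transform in $x$ of the law $\Lambda:=\mathrm{Gamma}(\gamma_1,c_1^{-1})\ast\cdots\ast\mathrm{Gamma}(\gamma_m,c_m^{-1})$, the identity $\int_0^\infty x^{\beta-1}e^{-(s+t)x}\,dx=\Gamma(\beta)(s+t)^{-\beta}$ together with Fubini gives
$$ \phi(s)=\int_0^\infty e^{-sx}f(x)\,dx=C\,\Gamma(\beta)\int_{(0,\infty)}(s+t)^{-\beta}\,\Lambda(dt), $$
that is, $\phi$ is a generalized Stieltjes transform of order $\beta$ of the (finite gamma convolution, hence $\GGC$) law $\Lambda$. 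It then remains to show that
$$ -(\log\phi)'(s)=\beta\,\frac{\int_{(0,\infty)}(s+t)^{-\beta-1}\,\Lambda(dt)}{\int_{(0,\infty)}(s+t)^{-\beta}\,\Lambda(dt)} $$
is itself a Stieltjes transform, i.e. of the form $a+\int_{(0,\infty)}(s+t)^{-1}U(dt)$ with $a\ge0$ and $U$ a nonnegative measure; integrating back and using $\phi(0)=1$ then reproduces exactly the $\GGC$ representation of the definition. One checks in passing that $a=0$ (since $f>0$ on all of $(0,\infty)$), that the total mass of $U$ equals $\beta$ (match $\phi(s)\sim C\Gamma(\beta)s^{-\beta}$ as $s\to\infty$), and that the $\GGC$ integrability conditions $\int_{(0,1)}|\log t|\,U(dt)<\infty$ and $\int_{(1,\infty)}t^{-1}U(dt)<\infty$ hold; these follow once $U$ is located, e.g. by reading it off as the boundary jump of $\mathrm{Im}\,\log\phi$ along the cut $(-\infty,0)$.

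The main obstacle is exactly this last step: upgrading the mere complete monotonicity of $\phi$---we already know from the consequence of Proposition~\ref{prop:HMkproducts} recorded in Section~\ref{secHCM} that $\phi$ is even $\HCM$---to the stronger statement that $-(\log\phi)'$ is a Stieltjes transform, equivalently that $-\log\phi$ is a complete Bernstein function. Two routes suggest themselves. Analytically, one continues $\phi$ to $\CC\setminus(-\infty,0]$ and verifies that $-(\log\phi)'$ maps the upper half-plane into the closed lower half-plane, which characterizes Stieltjes transforms; the explicit form of $\phi$ as a generalized Stieltjes transform of $\Lambda$, whose analytic structure is well understood, makes this plausible. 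More probabilistically, since $\HCM$ is closed under pointwise multiplication and $e^{-sx}\in\HCM$, every exponentially tilted and renormalized density $x\mapsto e^{-sx}f(x)/\phi(s)$ is again $\HCM$ and $-(\log\phi)'(s)$ is its mean, and one would try to read off the Stieltjes property from this structure. Once the generic case is settled, the reduction in the first paragraph finishes the proof, nothing further being required beyond the closure of $\GGC$ under weak limits already quoted.
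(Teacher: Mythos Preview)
Your proposal has a genuine gap: you explicitly identify ``the main obstacle'' --- proving that $-(\log\phi)'$ is a Stieltjes transform --- and then do not carry it out. You sketch an analytic-continuation route and a probabilistic route, but neither is executed; ``makes this plausible'' and ``one would try to read off'' are not proofs. So as written the argument is incomplete precisely at the decisive step.

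More importantly, you are working much harder than necessary, and you already hold the key fact without using it. You note in passing that $\phi$ is $\HCM$ (as a consequence of Proposition~\ref{prop:HMkproducts}, since the Laplace transform of an $\HM_k$ function is $\HM_k$, including $k=\infty$). That observation, together with $\phi(0+)=1$, is \emph{exactly} the hypothesis of Proposition~\ref{prop:GGCLTisHCM}, which characterizes the Laplace transforms of $\GGC$s as the $\HCM$ functions with value $1$ at $0$. This is the route the paper indicates: Proposition~\ref{prop:HCMinGGC} is recorded as a consequence of Proposition~\ref{prop:GGCLTisHCM}. No reduction to the generic generators, no explicit generalized Stieltjes representation, and no boundary-value analysis of $\log\phi$ are needed.

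Your reduction-to-generators strategy is not wrong in spirit --- it could be made to work --- but it reproduces, in a special case and by hand, what Proposition~\ref{prop:GGCLTisHCM} already packages. If you insist on avoiding that proposition, then you must actually verify the Pick/Nevanlinna-type sign condition for $-(\log\phi)'$ on the upper half-plane (or an equivalent), and that is nontrivial; merely knowing that $\Lambda$ is a finite gamma convolution does not by itself force the ratio of two generalized Stieltjes transforms of $\Lambda$ to be a Stieltjes transform.
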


Many well-known pdfs are $\HCM$ and therefore also $\GGC$s and hence
infinitely divisible. For instance gamma densities are $\HCM$. Then
it follows from Proposition \ref{prop:HMkproducts} (for $
k=\infty $)  that also the power $ q, \, q \ge 1,$ of the ratio
of two independent gamma variables has a density that is $\HCM$.
This density is of the form $ f(x)= C
x^{\beta-1}(1+cx^\alpha)^{-\gamma}, x>0,$ with $ \alpha=q^{-1}.$
Every lognormal density is also $\HCM$.

\medskip
The next proposition gives a  characterization of the LT of a $\GGC$
(\cite[Theorem 6.1.1]{LB92}).

\begin{proposition} \label{prop:GGCLTisHCM} A function $\phi(s) $ on $(0,\infty)$ is
the LT of a $\GGC$ if and only if $\phi(0+)=1 $ and $ \phi $ is $\HCM$.
\end{proposition}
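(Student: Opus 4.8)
The plan is to prove the two implications separately; the ``if'' direction carries all the difficulty.

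\emph{From $\GGC$ to $\HCM$.} The identity $\phi(0+)=1$ is immediate: letting $s\downarrow0$ in the defining formula and using that $-\log\!\bigl(\tfrac{t}{t+s}\bigr)=\log(1+s/t)\downarrow0$, monotone convergence gives $\log\phi(0+)=0$. For the $\HCM$ property, I would first observe that every gamma Laplace transform $s\mapsto(t/(t+s))^{u}=(1+s/t)^{-u}$ is $\HCM$, since for fixed $r>0$
\[
(1+rv/t)^{-u}(1+r/(tv))^{-u}=\Bigl(1+\tfrac{r}{t}\,(v+v^{-1})+\tfrac{r^{2}}{t^{2}}\Bigr)^{-u}
\]
is an affine function of $w=v+v^{-1}$ with positive coefficients, raised to the negative power $-u$, hence completely monotone in $w$. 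Because $e^{-as}=\lim_{N}(1+as/N)^{-N}$ is itself a pointwise limit of such transforms, and because, for atomic measures $U_{n}$ approximating $U$ on $[1/n,n]$, the finite products $\exp\!\bigl(\int\log\tfrac{t}{t+s}\,U_{n}(dt)\bigr)$ converge pointwise in $s$ to $\exp\!\bigl(\int\log\tfrac{t}{t+s}\,U(dt)\bigr)$ (the two integrability conditions on $U$ ensuring convergence of the exponents), $\phi$ is a pointwise limit of a sequence $\phi_{n}$ of finite products of $\HCM$ functions. Two closure properties of $\HM_{\infty}=\HCM$ then conclude: closure under multiplication (Section~\ref{secHCM}), so each $\phi_{n}\in\HCM$; and closure under pointwise limits, because for each fixed $r$ the maps $w\mapsto\phi_{n}(rv)\phi_{n}(r/v)$ are completely monotone on $[2,\infty)$ and converge pointwise to the (finite) limit $w\mapsto\phi(rv)\phi(r/v)$, and a pointwise limit of completely monotone functions is completely monotone. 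Hence $\phi\in\HCM$.

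\emph{From $\HCM$ to $\GGC$.} Now assume $\phi(0+)=1$ and $\phi\in\HCM$. Being $\HM_{1}$, $\phi$ has $x\mapsto\log\phi(e^{x})$ concave on $\RR$ (Section~\ref{secHCM}) with limit $\log\phi(0+)=0$ as $x\to-\infty$; a concave function with a finite limit at $-\infty$ has nonpositive derivative throughout (the derivative is non-increasing and its supremum, attained in the limit at $-\infty$, cannot be positive), so $\phi$ is non-increasing with $0<\phi\le1$ on $(0,\infty)$. By the description of $\HCM$ functions recalled in Section~\ref{secHCM}, $\phi$ is a pointwise limit of functions of the form $C\,x^{\beta-1}\prod_{i}(1+c_{i}x)^{-\gamma_{i}}$ with positive parameters, the limit also producing the admissible factors $e^{-cx}$ (from $c_{i}\to0$, $\gamma_{i}c_{i}\to c$) and $e^{-c/x}$. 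The key claim is that, because $\phi$ is bounded and non-increasing with $\phi(0+)=1\in(0,\infty)$, these approximants may be chosen with $\beta=1$ and without any $e^{-c/x}$-factor. Granting this, each approximant is a positive constant --- tending to $\phi(0+)=1$ --- times a finite product of gamma transforms, possibly further multiplied by a factor $e^{-cs}$; that is, a positive multiple of a $\GGC$ Laplace transform. Since the $\GGC$ class is closed under weak limits (Section~\ref{secGGC}) and $\phi(0+)=1$, it follows by the continuity theorem for Laplace transforms that $\phi$ is the Laplace transform of a $\GGC$, with $\log\phi(s)=-as+\int_{(0,\infty)}\log\!\bigl(\tfrac{t}{t+s}\bigr)U(dt)$, where $U$ is a vague limit of the atomic Thorin measures of the approximants and the normalisation $\phi(0+)=1$ kills the multiplicative constant.

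\emph{Main obstacle.} I expect the difficulty to be concentrated in the key claim above: that under the normalisation $\phi(0+)=1$ the spurious power $x^{\beta-1}$ and the factor $e^{-c/x}$ genuinely disappear in the limit, and that the limiting measure $U$ inherits \emph{exactly} the two integrability conditions $\int_{(0,1)}|\log t|\,U(dt)<\infty$ and $\int_{(1,\infty)}t^{-1}U(dt)<\infty$ from the finiteness and strict positivity of $\phi$ on $(0,\infty)$. This is not a formal manipulation; it must combine the structural representation of $\HCM$ functions with the hyperbolic-monotonicity estimates underlying \cite{LB97} --- in particular the log-concavity of $\log\phi(e^{x})$ and the decay/growth control it forces. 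A tempting alternative is to first upgrade ``$\phi\in\HCM$, $\phi(0+)=1$'' to ``$\phi$ completely monotone'' (so that $\phi=\widehat{\mu}$ for a probability measure $\mu$ by Bernstein's theorem) and then recognise $-(\log\phi)'(s)=a+\int_{(0,\infty)}(s+t)^{-1}U(dt)$ as a Stieltjes function; but establishing either the complete monotonicity of $\phi$ or the Stieltjes property runs back into the same $\HCM$-theoretic core, so I would not count on a genuine shortcut.
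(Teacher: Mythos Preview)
The paper does not actually prove this proposition; it is quoted as a background result from \cite[Theorem~6.1.1]{LB92}. So there is no in-paper proof to compare against, and the relevant question is whether your outline would reproduce Bondesson's original argument or constitute a correct alternative.

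Your forward implication (GGC $\Rightarrow$ $\HCM$) is correct and is essentially the standard argument: gamma transforms are $\HCM$ by the explicit computation you give, the class is closed under multiplication, and complete monotonicity of $w\mapsto h(w)$ survives pointwise limits. Nothing more is needed there.

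For the converse, you have correctly located the difficulty but not resolved it, and the route you sketch is not the one taken in \cite{LB92}. Your plan is to approximate $\phi$ by elementary $\HCM$ functions $C_n s^{\beta_n-1}\prod(1+c_is)^{-\gamma_i}$ and argue that the constraint $\phi(0+)=1$ forces $\beta_n\to 1$ and kills $e^{-c/x}$ factors. But the approximation statement in Section~\ref{secHCM} gives only \emph{pointwise} convergence on $(0,\infty)$, with no uniformity near $0$; nothing prevents the approximants from having $\beta_n\ne 1$ while still converging on compacta of $(0,\infty)$. Upgrading to ``approximants with $\beta_n=1$'' is exactly as hard as the theorem itself. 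Your closing paragraph already senses this.

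The proof in \cite{LB92} proceeds instead through analytic continuation. One shows that an $\HCM$ function extends holomorphically to $\CC\setminus(-\infty,0]$ (this is visible already for the building blocks $(1+cs)^{-\gamma}$ and persists under limits via the local boundedness coming from $h(w)\le (f(u))^2$), and then works with the boundary behaviour of $-\log\phi$ on the negative axis: the $\HCM$ structure forces $\mathrm{Im}\,(-\log\phi)$ to have the correct sign in the upper half-plane, which is the complex (Pick/Nevanlinna) characterisation of a $\GGC$ Laplace exponent. The sentence immediately after the proposition in the present paper (``Using another complex characterization of the LT of a $\GGC$\dots'') is a pointer to this circle of ideas. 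So the ``tempting alternative'' you dismiss at the end is in fact closer to the actual proof than your main line; the step you call ``the same $\HCM$-theoretic core'' is handled there not by approximation but by direct complex-analytic arguments specific to $\HCM$.
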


This result will be our basic tool in Section 4. Since the LT of
an $\HM_k$ function is $\HM_k$, and this also holds for $
k=\infty,$ Proposition~\ref{prop:HCMinGGC} can be seen as a
consequence of Proposition~\ref{prop:GGCLTisHCM}. Using another complex characterization of the LT of a GGC, we can get the following result (\cite[Theorem
4.2.1]{LB92}).

\begin{proposition} \label{prop:GammaoverlogconcaveisGGC} Let $ Y \sim  \mathrm{Gamma}(1,1)$ and let $ X
>0 $ be an independent rv with a density $ f(x) $ that is
logconcave (or only such that $ xf(x) $ is logconcave). Then $
Y/X \sim \GGC$.
\end{proposition}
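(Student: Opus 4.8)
The plan is to compute the Laplace transform of $Y/X$ in closed form and then invoke the complex characterization of Laplace transforms of $\GGC$s referred to just after Proposition~\ref{prop:GGCLTisHCM}.

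Conditioning on $X$: since $Y$ is standard exponential, the law of $Y/X$ given $X=x$ is exponential with rate $x$, so
\[
\phi(s):=\mathbb{E}\!\left[e^{-sY/X}\right]=\mathbb{E}\!\left[\frac{X}{X+s}\right]=\int_0^\infty\frac{x\,f(x)}{x+s}\,dx=\int_0^\infty\frac{g(x)}{x+s}\,dx ,
\]
where $g(x):=x f(x)$ is logconcave under either hypothesis (if $f$ is logconcave then so is the product $x\cdot f(x)$). Equivalently $Y/X$ has density $w\mapsto\int_0^\infty g(x)e^{-wx}\,dx$, the Laplace transform of $g$; in particular $\phi$ is a genuine Laplace transform and $\phi(0+)=\int_0^\infty g(x)/x\,dx=\int_0^\infty f(x)\,dx=1$.

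The characterization I would invoke is the following (complex) form: a genuine Laplace transform $\phi$ with $\phi(0+)=1$ is the Laplace transform of a $\GGC$ with left-extremity $0$ if and only if $-\phi'/\phi$ is a Stieltjes function, i.e.\ $-\phi'(s)/\phi(s)=a+\int_{(0,\infty)}(t+s)^{-1}U(dt)$ with $a\ge0$ and $U\ge0$ (then $U$ is the Thorin measure). Here
\[
-\frac{\phi'(s)}{\phi(s)}=\frac{\int_0^\infty g(x)(x+s)^{-2}\,dx}{\int_0^\infty g(x)(x+s)^{-1}\,dx}=:\frac{I_1(s)}{I_0(s)} ,
\]
so it suffices to show that $I_1/I_0$ is Stieltjes. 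Since a function is Stieltjes precisely when it is $\ge0$ on $(0,\infty)$, holomorphic on $\CC\setminus(-\infty,0]$, and has $\operatorname{Im}\le0$ on the upper half-plane, I must check: (i) $I_0,I_1$ are holomorphic on $\CC\setminus(-\infty,0]$ and $I_0$ is non-vanishing there, which is immediate ($g$ is bounded, being logconcave, with $\int g(x)/x\,dx<\infty$, and $\operatorname{Im}(x+s)^{-1}$ keeps a fixed sign when $\operatorname{Im}s\neq0$, so $I_0(s)\neq0$); (ii) $I_1/I_0>0$ on $(0,\infty)$, which is clear; and (iii) $\operatorname{Im}\big(I_1(s)\overline{I_0(s)}\big)\le0$ for $\operatorname{Im}s>0$.

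I expect (iii) to be the whole difficulty. Writing it as $\int_0^\infty\!\!\int_0^\infty g(x)g(y)\,\operatorname{Im}\big[(x+s)^{-2}(y+\overline s)^{-1}\big]\,dx\,dy$ and symmetrizing in $x\leftrightarrow y$ reduces it to a single inequality for a kernel weighted by $g(x)g(y)$; after the routine simplification this is a correlation inequality saying that $X$ and a certain unimodal (but not monotone) function of $X$ are not too positively correlated under the tilted weight $g(x)/|x+s|^2$. This is exactly where logconcavity of $g$ is indispensable: the inequality fails for general $g$ (there are densities $f$ for which $Y/X$ is not a $\GGC$), so it cannot come from positivity of the weights alone. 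I would prove it from the total-positivity description of logconcavity, $g(a)g(b)\ge g(c)g(d)$ whenever $c\le a\le b\le d$ and $a+b=c+d$, combined with the hyperbolic substitution $x=uv$, $y=u/v$ in the symmetrized integral (the device behind Proposition~\ref{prop:HMkproducts}), so that the integrand regroups into a manifestly signed form; alternatively, one can verify (iii) first for piecewise-exponential logconcave $g$ — where $I_0$ and $I_1$ are elementary — and pass to the limit. Either way, this inequality carries the entire content of the proposition, the rest being bookkeeping.
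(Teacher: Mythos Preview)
The paper does not itself prove this proposition; it cites \cite[Theorem~4.2.1]{LB92} and only says that the argument uses ``another complex characterization of the LT of a $\GGC$''. Your framework is exactly that: writing $\phi(s)=\int_0^\infty g(x)/(x+s)\,dx$ with $g=xf$ and reducing the $\GGC$ property to $-\phi'/\phi$ being Stieltjes is the complex criterion in question (equivalently, one shows that $r\mapsto\arg\phi(-r+i0)$ is non-increasing along the cut, which recovers the nonnegativity of the Thorin measure). Steps (i)--(ii) are correct, and you have correctly isolated (iii) as the entire content.

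The gap is that (iii) is not proved. Of your two sketches, the first looks misdirected: the hyperbolic substitution $x=uv,\ y=u/v$ is the natural device when the hypothesis is $\HM_k$, because then $g(uv)g(u/v)$ is controlled as a function of $w=v+v^{-1}$. Here the hypothesis is \emph{additive} logconcavity of $g$; the TP$_2$ inequality you quote constrains $a+b$, and under the hyperbolic change of variables $x+y=u(v+v^{-1})$ mixes $u$ and $v$, so nothing decouples. This is precisely the distinction the paper stresses in Proposition~\ref{prop:HM1logconcave} and Remark~\ref{remarksmain}(i): logconcavity of $xf(x)$ in $x$ is \emph{not} the same as $f\in\HM_1$ (which is logconcavity of $xf(x)$ in $\log x$), and the hyperbolic machinery of this paper is built for the latter. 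Your second suggestion (piecewise-exponential $g$, then a limit) is more plausible, but you have not verified the base case, which is not visibly easier than the general one.

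The route taken in \cite{LB92} works instead on the boundary: since $\phi(-r+i0)=\mathrm{p.v.}\!\int g(x)/(x-r)\,dx-i\pi g(r)$, the required monotonicity of $\arg\phi(-r+i0)$ becomes a one-variable sign condition relating $g$ to its principal-value transform, and logconcavity enters through a PF$_2$/variation-diminishing argument in that single variable rather than through a two-variable hyperbolic rearrangement. As written, your proposal names the right target but does not hit it.
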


One should notice that in Proposition
\ref{prop:GammaoverlogconcaveisGGC} the rv $ X $ is not assumed
to have an $\HM_1$-density.

\begin{proposition} \label{LTtimesf} If $ f(x) $ is the density
of a $\GGC$ and $ x^{-\alpha}f(x), $ where $\alpha \ge 0, $ can be
normalized to be the pdf $ g(x) $ of a probability distribution,
then $ g(x) $ is also the pdf of a $\GGC$. \end{proposition}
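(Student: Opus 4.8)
The plan is to verify the criterion of Proposition~\ref{prop:GGCLTisHCM}: since $\hat g(0+)=1$ holds automatically, it suffices to show that the Laplace transform $\hat g$ of $g$ is $\HCM$. We may assume $\alpha>0$ (nothing to prove if $\alpha=0$), and put $\phi(s)=\int_0^\infty e^{-sx}f(x)\,dx$, which is $\HCM$ by Proposition~\ref{prop:GGCLTisHCM}. Inserting $x^{-\alpha}=\Gamma(\alpha)^{-1}\int_0^\infty r^{\alpha-1}e^{-rx}\,dr$ and using Fubini's theorem, one obtains, with $c$ the normalizing constant of $g$,
\begin{equation}\label{plan:ghat}
\hat g(s)=\frac{c}{\Gamma(\alpha)}\int_0^\infty r^{\alpha-1}\phi(s+r)\,dr
 =\frac{c}{\Gamma(\alpha)}\int_s^\infty (t-s)^{\alpha-1}\phi(t)\,dt .
\end{equation}

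Formula~\eqref{plan:ghat} already indicates the structure of the problem: $e^{-rx}f(x)/\phi(r)$ is the density of the Esscher transform of the $\GGC$ with density $f$, and this Esscher transform is itself a $\GGC$ (its Thorin measure is the original one translated by $r$, its left-extremity being unchanged), so $g$ is a continuous mixture over $r>0$ of $\GGC$ densities. This is not yet conclusive, because the $\GGC$ class is not closed under mixtures (equivalently, $\HCM$ is not stable under addition or integration); the $\HCM$-property of $\phi$, i.e.\ the full $\GGC$-structure of $f$, must be exploited.

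So the heart of the proof is to show directly from \eqref{plan:ghat} that $\hat g$ is $\HCM$, i.e.\ that for each $u>0$ the function $\hat g(uv)\hat g(u/v)$ is completely monotone in $w=v+v^{-1}$. Writing this out as
\[
\hat g(uv)\hat g(u/v)=\frac{c^2}{\Gamma(\alpha)^2}\int_0^\infty\!\!\int_0^\infty (\xi\eta)^{\alpha-1}\,\phi(uv+\xi)\,\phi(u/v+\eta)\,d\xi\,d\eta ,
\]
I would perform hyperbolic substitutions of the type $x=uv$, $y=u/v$ in this double integral — the device used to prove Proposition~\ref{prop:HMkproducts} — and combine them with the integral representation \eqref{repr} of $\HCM\,(=\HM_\infty)$ functions, so as to reduce the claim to the complete monotonicity in $w$ of suitable auxiliary integrals over the new variables. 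The hard part, exactly as in the ``far from trivial'' proof of Proposition~\ref{prop:HMkproducts}, is that this complete monotonicity does not come out term by term, but only once the substitutions and the domain of integration have been arranged correctly; that bookkeeping is the main obstacle. (An alternative route would use the complex-analytic characterization behind Proposition~\ref{prop:GammaoverlogconcaveisGGC}: by \eqref{plan:ghat}, $-(\log\hat g)'(s)$ is the $r^{\alpha-1}\phi(s+r)\,dr$-weighted average of $-(\log\phi)'(s+r)$, and one would then have to verify that this weighted average is again a Stieltjes transform of $s$.)
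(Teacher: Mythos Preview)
Your proposal is a plan, not a proof. You correctly set up the problem --- the representation
\[
\hat g(s)=\frac{c}{\Gamma(\alpha)}\int_0^\infty r^{\alpha-1}\phi(s+r)\,dr
\]
is right, and your observation that this exhibits $g$ as a mixture of Esscher transforms of a $\GGC$ (hence a mixture of $\GGC$s) is correct and relevant. But you then stop exactly at the crucial point: you write out $\hat g(uv)\hat g(u/v)$ as a double integral, announce that hyperbolic substitutions ``of the type used for Proposition~\ref{prop:HMkproducts}'' should work, and explicitly call the remaining bookkeeping ``the main obstacle'' without doing it. That obstacle is the entire content of the proposition. Moreover, the hyperbolic device applies to products of the form $\phi(uv)\phi(u/v)$; here the arguments are $uv+\xi$ and $u/v+\eta$, which are \emph{not} of the form $u'v'$, $u'/v'$ for any common $u'$ unless one makes a further change of variables, so it is not at all clear that the machinery of Proposition~\ref{prop:HMkproducts} transfers. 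Your alternative route via the Pick/Stieltjes characterization has the same status: you would need to show that a $\phi(s+r)r^{\alpha-1}\,dr$--weighted average of the Stieltjes functions $-(\log\phi)'(s+r)$ is again Stieltjes in $s$, and you give no argument for this.

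For comparison, the paper does not prove this statement from scratch either: it records it as a limit case of \cite[Theorem~6.2.4]{LB92}. That theorem (roughly: multiplying a $\GGC$ density by an $\HCM$ function and renormalizing again yields a $\GGC$) already contains the hard $\HCM$ verification; the case $x^{-\alpha}$ is then obtained as the limit of the $\HCM$ functions $(c+x)^{-\alpha}$ as $c\downarrow 0$, using that the $\GGC$ class is closed under weak limits. If you want a self-contained argument, the cleanest path is to first prove the special case $h(x)=(c+x)^{-\alpha}$ (equivalently, the Esscher-type tilt by a Gamma LT), where the $\HCM$ computation is tractable, and then pass to the limit --- rather than attacking the mixture integral for general $\alpha$ directly.
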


This result is only a limit case of \cite[Theorem 6.2.4]{LB92}.
The following recent result from \cite{LB14} needs to be
mentioned. It can be proved by the help of Proposition
\ref{prop:GGCLTisHCM}.

\begin{proposition} \label{prop:GGCproducts} Let $ X \sim \GGC$ and $ Y \sim \GGC$ be
independent rvs. Then $ X \cdot Y \sim \GGC$.
\end{proposition}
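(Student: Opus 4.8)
The plan is to verify the criterion of Proposition~\ref{prop:GGCLTisHCM}: since $\phi_{XY}(0+)=E[1]=1$ automatically, it suffices to show that the Laplace transform $\phi_{XY}$ of $X\cdot Y$ is $\HCM$ on $(0,\infty)$. Conditioning on $Y$ gives $\phi_{XY}(s)=E\!\left[\phi_X(sY)\right]$, where $\phi_X$ (the LT of $X$) is $\HCM$ by Proposition~\ref{prop:GGCLTisHCM}, and likewise $\phi_Y$ is $\HCM$. The degenerate case in which $X$ or $Y$ is a constant $a$ is trivial, since then $X\cdot Y$ is a rescaling of the other factor and $\GGC$ is invariant under rescaling because $s\mapsto\phi(cs)$ is $\HCM$ whenever $\phi$ is.

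To check the $\HCM$-property of $\phi_{XY}$, fix $u>0$ and write, with $Y_1,Y_2$ independent copies of $Y$,
\[
\phi_{XY}(uv)\,\phi_{XY}(u/v)=E\!\left[\phi_X(uvY_1)\,\phi_X((u/v)Y_2)\right];
\]
the goal is to show this is completely monotone (CM) in $w=v+v^{-1}$. The natural first step is the hyperbolic substitution underlying Proposition~\ref{prop:HMkproducts}: putting $\hat u=u\sqrt{Y_1Y_2}$ and $\hat v=v\sqrt{Y_1/Y_2}$ one has $uvY_1=\hat u\hat v$ and $(u/v)Y_2=\hat u/\hat v$, so the $\HCM$-ness of $\phi_X$ gives $\phi_X(uvY_1)\phi_X((u/v)Y_2)=h_{\hat u}\!\left(v\sqrt{Y_1/Y_2}+v^{-1}\sqrt{Y_2/Y_1}\right)$ for a CM function $h_{\hat u}$. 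Symmetrising in $Y_1\leftrightarrow Y_2$ (note $\hat u$ is symmetric) and using Bernstein's theorem to express each $h_{\hat u}$ as a Laplace transform, the matter reduces to the behaviour, for fixed $\lambda,\rho>0$, of $v\mapsto e^{-\lambda\rho v-\lambda v^{-1}/\rho}+e^{-\lambda v/\rho-\lambda\rho/v}$ viewed as a function of $w$.

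This is where the main obstacle sits. For a \emph{fixed} realisation of $(Y_1,Y_2)$ the inner expression is not CM in $w$: completing the square in $v+v^{-1}$ shows it equals $2e^{-Aw}\cosh\!\left(B\sqrt{w^2-4}\right)$ with $A=\tfrac{\lambda}{2}(\rho+\rho^{-1})$ and $B=\tfrac{\lambda}{2}|\rho-\rho^{-1}|$, and since its right-derivative at $w=2$ has the sign of $2B^2-A$, it already fails to be non-increasing near $w=2$ once $\rho$ is large. Hence one cannot argue realisation by realisation; complete monotonicity can only emerge after integration, and the hypothesis that $Y$ itself is a $\GGC$ — equivalently that $\phi_Y$ is $\HCM$ — must be genuinely used. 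Concretely, I would expand $\phi_X$ through its own Laplace transform, so that $\phi_{XY}(uv)\phi_{XY}(u/v)$ becomes the four-fold integral $\int\!\cdots\!\int e^{-uvx_1y_1-(u/v)x_2y_2}\,\mu_X(dx_1)\mu_Y(dy_1)\mu_X(dx_2)\mu_Y(dy_2)$ (with $\mu_X,\mu_Y$ the laws of $X,Y$), and then exploit the $\HCM$-ness of \emph{both} $\phi_X$ and $\phi_Y$ — via the Laplace representations of $\phi_X(uv)\phi_X(u/v)$ and $\phi_Y(uv)\phi_Y(u/v)$, i.e.\ the $k=\infty$ case of \eqref{repr} — through a coupled pair of hyperbolic substitutions, one in $(x_1,x_2)$ and one in $(y_1,y_2)$, before recognising the resulting integral as CM in $w$. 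This is the same double-integral device as in the proof of Proposition~\ref{prop:HMkproducts}, one step more elaborate, and making the two substitutions cohere is the crux.

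A reduction worth keeping in reserve is to pass, by weak limits, to the case where $X$ (and $Y$) are finite convolutions of gamma variables: every $\GGC$ is such a weak limit (Thorin), $X_nY_n\Rightarrow XY$ whenever $X_n\Rightarrow X$, $Y_n\Rightarrow Y$ with $X_n$ independent of $Y_n$ and $X$ of $Y$, and $\GGC$ is weakly closed. This alone does not finish the argument — with $X=\sum_i\Gamma_i$ one still has $X\cdot Y=\bigl(\sum_i\Gamma_i\bigr)Y$, which is not a sum of independent summands, so closure of $\GGC$ under convolution does not apply — but it may streamline the computation above. The same caveat attends the equivalent complex-analytic reformulation, namely that $-\log\phi_{XY}$ should be a complete Bernstein function, to be deduced from the corresponding property of $-\log\phi_X$ and $-\log\phi_Y$: there the snag is that the formula $\phi_{XY}(s)=E[\phi_X(sY)]$ is only visibly analytic for $\mathrm{Re}\,s>0$.
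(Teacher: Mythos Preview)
The paper does not actually prove this proposition; it merely cites \cite{LB14} and remarks that the proof uses Proposition~\ref{prop:GGCLTisHCM}. So there is no detailed proof here to compare against, only the one-line indication that the $\HCM$-characterisation of the LT is the right tool --- which you have correctly identified.

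That said, your proposal is not a proof but an outline with an explicitly unfinished core. You set up the problem correctly, you correctly diagnose that a realisation-by-realisation argument fails (your computation with $2e^{-Aw}\cosh(B\sqrt{w^2-4})$ is right), and you correctly see that both $\HCM$-hypotheses must be used. But when you reach the decisive step --- carrying out the coupled pair of hyperbolic substitutions in the four-fold integral and showing the result is CM in $w$ --- you write ``making the two substitutions cohere is the crux'' and stop. That \emph{is} the entire content of the theorem; everything before it is bookkeeping. As written, the proposal establishes nothing beyond what the paper already says in one sentence.

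Two concrete points. First, your reduction to finite gamma convolutions is sound as a limiting device but, as you note, does not by itself simplify the problem: even for $X=\Gamma_1+\Gamma_2$ the product $XY$ is not a sum of independent pieces, so one is back to the same four-fold integral. Second, the double hyperbolic substitution you sketch --- $(x_1,x_2)\mapsto(p,q)$ and $(y_1,y_2)\mapsto(r,s)$ --- yields an exponent of the form $-upr\,(vqs+(vqs)^{-1})$, i.e.\ CM in $vqs+(vqs)^{-1}$, not in $v+v^{-1}$; to pass from the former to the latter you must integrate out $q$ and $s$, and it is precisely here that the $\HCM$-ness of $\phi_X$ and $\phi_Y$ (via the Bernstein representations of $\phi_X(u'v')\phi_X(u'/v')$ and $\phi_Y(u'v')\phi_Y(u'/v')$) must be invoked in a coordinated way. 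Working this out is the substance of \cite{LB14}; until you do, the argument is incomplete.
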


Well-known examples of $\GGC$ distributions include the log-normal
distribution and positive strictly $\alpha$-stable distributions.
Also, each {\it negative} power of a gamma variable is shown to
have a $\GGC$-distribution in \cite{BoschSimon}. Bosch and Simon \cite{BS} and Jedidi and Simon \cite{JedidiSimon} give other novel results on $\HM$, $\HCM$, and $\GGC$
distributions.

\section{Main result} \label{secMainResult}

Here the main result is presented as a theorem in Section
\ref{secmaintheorem}. Moreover comments are given. The proof is
presented in Section \ref{secproof}.

\subsection{Formulation of the main result and
comments}\label{secmaintheorem}

\begin{theorem}\label{maintheorem} Let $ k \ge 1 $ be an integer. Let $ Y \sim
 \mathrm{Gamma}(k,1)$ and $ X \sim \HM_k$ be independent rvs. Then $ Y
\! \cdot \! X \sim \GGC$ and $ Y/X \sim \GGC$.
\end{theorem}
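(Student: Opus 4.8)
The key tool is Proposition~\ref{prop:GGCLTisHCM}: it suffices to show that the Laplace transform of $Y\cdot X$ (and of $Y/X$) is $\HCM$. Write $\phi_{YX}(s) = E[e^{-sYX}] = E_X\big[ E_Y[e^{-s(YX)} \mid X]\big] = E_X\big[(1+sX)^{-k}\big]$, using that the LT of a $\mathrm{Gamma}(k,1)$ variable is $(1+s)^{-k}$. Thus
\begin{equation}\label{eq:plan-LT}
\phi_{YX}(s) = \int_0^\infty (1+sx)^{-k} f_X(x)\,dx, \qquad
\phi_{Y/X}(s) = \int_0^\infty (1+s/x)^{-k} f_X(x)\,dx = \int_0^\infty \frac{x^k}{(x+s)^{k}} f_X(x)\,dx.
\end{equation}
So the whole theorem reduces to: \emph{if $f_X$ is $\HM_k$, then the two functions in \eqref{eq:plan-LT} are $\HCM$.} By the substitution $x\mapsto 1/x$ (which sends $\HM_k$ densities to $\HM_k$ densities up to the factor $x^{-2}$, via Proposition~\ref{prop:HMkproducts} and Proposition~\ref{LTtimesf}-type reasoning), the two cases should be essentially interchangeable, so I would focus on one of them, say $\phi_{YX}$.

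To prove $\phi_{YX}\in\HCM$, I must show that for each fixed $u>0$ the function $\phi_{YX}(uv)\phi_{YX}(u/v)$, written as a function $g(w)$ of $w=v+v^{-1}$, has derivatives alternating in sign to all orders. Expanding the product of the two integrals gives a double integral
\[
\phi_{YX}(uv)\phi_{YX}(u/v) = \int_0^\infty\!\!\int_0^\infty (1+uvx)^{-k}(1+(u/v)y)^{-k} f_X(x)f_X(y)\,dx\,dy.
\]
Following the hint in the excerpt ("hyperbolic substitutions of the form $x=uv,\,y=u/v$"), I would substitute $x = r\sigma$, $y = r/\sigma$ in the $(x,y)$-integral, so that $x y = r^2$ and $x/y = \sigma^2$. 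The point of this change of variables is that the product $(1+uvx)^{-k}(1+(u/v)y)^{-k} = (1+ u r \cdot v\sigma)^{-k}(1+ur\cdot (v\sigma)^{-1}\cdot(\sigma/v)\cdot v)^{-k}$... more precisely, one arranges the two linear factors to depend on $v$ and $\sigma$ only through the combination $v\sigma + (v\sigma)^{-1}$ plus $v\sigma^{-1} + v^{-1}\sigma$, i.e. through symmetric functions, so that the integrand becomes manifestly a function of $w$ with the right monotonicity. Simultaneously, $f_X(r\sigma)f_X(r/\sigma)$ is, because $f_X\in\HM_k$, expressible via the representation \eqref{repr} as $c_r + \int_{(\tau,\infty)}(\lambda - \tau)^{k-1}H_r(d\lambda)$ with $\tau = \sigma+\sigma^{-1}$ (and $c_r=0$ since $f_X$ is a pdf). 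The plan is then to integrate out $\sigma$ (equivalently $\tau$) first: the inner integral $\int (1+ur v\sigma)^{-k}(1+ur(\sigma/v))^{-k}(\lambda-\sigma-\sigma^{-1})_+^{k-1}\,d\sigma$ should reduce, by an explicit computation exploiting the matching powers $k$, to something of the clean form (constant)$\cdot(1+u^2 r^2 + u r^2\cdot(\text{something}))^{-k}$ or a product of such — crucially an $\HCM$ function of $w=v+v^{-1}$ for each fixed $r,\lambda$. Integrating the remaining nonnegative measure $r^2\,dr$ and $H_r(d\lambda)$ over a family of $\HCM$ functions of $w$ preserves $\HCM$, giving the result.

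The main obstacle — and the "surprising element" the authors allude to — is the explicit evaluation of that inner $\sigma$-integral and verifying it is $\HCM$ in $w$. The miracle that must occur is that the integral of $(1+A\sigma)^{-k}(1+B/\sigma)^{-k}$ against $(\lambda-\sigma-\sigma^{-1})_+^{k-1}\,d\sigma$ collapses (for integer $k$, via a Beta-type or residue computation, or repeated integration by parts transferring the $(k-1)$ derivatives onto the two rational factors) to a constant multiple of a single factor of the form $\big(\mu(\lambda) + \nu\, w\big)^{-k}$ or a product $\prod_i (a_i + b_i w)^{-k}$ with nonnegative coefficients depending on $ur$ and $\lambda$; any such function of $w$ is $\HCM$ because $x^{-k}$ and affine-in-$w$ substitutions with nonnegative slope preserve $\HM_\infty$. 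I expect the bookkeeping of which variable plays the role of "$v$" versus "$u$" in the two nested $\HM$-representations to be delicate, and the integer-$k$ hypothesis to be essential precisely at the step where the finite Taylor-type expansion $(\lambda-w)^{k-1}$ interacts with the $k$-fold poles $(1+\cdot)^{-k}$ to yield a closed form. Once the inner integral is in hand, the outer assembly (an integral of $\HCM$ functions against a positive measure) and the reduction of the $Y/X$ case by inversion are routine.
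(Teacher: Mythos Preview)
Your overall strategy matches the paper's: write the Laplace transform of $Y/X$ as $\int_0^\infty (x/(x+s))^k f_X(x)\,dx$, form the product $\phi(st)\phi(s/t)$, make the hyperbolic substitution in the resulting double integral, invoke the representation \eqref{repr} for $f_X(uv)f_X(u/v)$, and reduce everything to showing complete monotonicity (in the outer variable $T=t+t^{-1}$, your $w$) of a single explicit inner integral over a finite interval. The reduction of the $Y\cdot X$ case to the $Y/X$ case by $X\mapsto 1/X$ is also how the paper handles it.

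The gap is in your expectation of what that inner integral looks like. It does \emph{not} collapse to a product of factors of the type $(\mu(\lambda)+\nu\,w)^{-k}$; already for $k=1$ the inner integral equals $\frac{1}{a-a^{-1}}\log\frac{T+A}{T+B}$ (with $A=ab+(ab)^{-1}$, $B=a/b+b/a$, $a=u/s$, and $b\ge 1$ determined by $b+b^{-1}=\lambda$), and for general integer $k$ it has the form $J_k(T)=P_k(T)+Q_k(T)\log\frac{T+A}{T+B}$ with $P_k,Q_k$ polynomials of degrees $k-2$ and $k-1$. Such an expression is not $\HCM$ by inspection, and the mechanism you anticipate (a Beta/residue computation collapsing the integral to a rational $\HCM$ function of $w$) simply does not occur. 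The actual ``surprising element'' is one level deeper: it is the $k$-th $T$-derivative of $J_k$ --- not $J_k$ itself --- that has the clean form $(-1)^k(k-1)!\,(b-b^{-1})^{2k-1}\big((T+A)(T+B)\big)^{-k}$, which \emph{is} completely monotone. Establishing this requires an explicit (generating-function) computation of $J_k$, a degree count on the polynomial pieces to see that $J_k^{(k)}$ must be rational of the shape $R_k(T)\big((T+A)(T+B)\big)^{-k}$, and an asymptotic argument as $T\to\infty$ to pin $R_k$ down as a constant. One then still has to propagate the sign information from order $k$ back down to orders $0,\ldots,k-1$ using $J_k^{(j)}(\infty)=0$. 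None of this machinery is present in your plan, and without it the argument does not close.
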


We give some comments on the above theorem.

\begin{remarks}\rm \label{remarksmain}
\begin{enumerate}
 \item For $ k=1 $  Theorem~\ref{maintheorem} differs from
 Proposition~\ref{prop:GammaoverlogconcaveisGGC}.
  One should notice that $ X \sim \HM_1 \Leftrightarrow X^{-1}  \sim \HM_1$
 but logconcavity of $ f_X $ is not equivalent to logconcavity of $ f_{X^{-1}}.
 $ One can also notice that every gamma density is $\HCM$ (and thus
 $\HM_1$) but only logconcave when the shape parameter is $\ge 1.$


\item  In the case $k=1$ the LT $\phi_1(s)= \int_0^\infty (x+s)^{-1}xf_X(x)dx $ of $Y/X$ for independent $Y \sim \mathrm{Gamma}(1,1)$ and
$X \sim \HM_1$ is the Stieltjes transform (or double Laplace transform) of the measure $x f_X(x)dx$.
For $k>1$ the LT $\phi_k(s)= \int_0^\infty (x+s)^{-k}x^k f_X(x)dx $ coincides with the so-called generalized Stieltjes
transform (of order $k$) of the measure $x^k f_X(x) dx$. In that sense the above theorem can be restated as follows:
Assume $f_X(x)$ is an $\HM_k$ function. Then the $k$-th order generalized Stieltjes transform of $x^k f_X(x) dx$ is $\HCM$, i.e. it is the LT of a $\GGC$.

\item Clearly Theorem~\ref{maintheorem} remains true if $Y\sim \mathrm{Gamma}(k, \theta)$ for any $\theta>0$, since in this case $\theta Y\sim \mathrm{Gamma}(k,
1)$. Considering $ Y \sim \mathrm{Gamma}(k,k)$  and letting $ k
\rightarrow \infty $ we get that $ Y \rightarrow 1 $ in
probability. Hence for $ X \sim \mathrm{HM}_k $ with $ k $ fixed
it is neccessary in the theorem to have a restriction upwards on
the shape parameter of $ Y $ since otherwise it would incorrectly
follow that $ \HM_k \subset \GGC$. For instance,
if $ Y \sim \mathrm{Gamma}(2,1)$ and $ X \sim U(1,2), $ then $ f_X $ is
$\HM_1$ but $ Y/X \not \sim \GGC$.

\item Letting again $ k \rightarrow \infty $ and so that  $Y \rightarrow 1 $ in
probability, we get back Proposition~\ref{prop:HCMinGGC} as a
limit case of Theorem~\ref{maintheorem}.
 Since a
$\mathrm{Gamma}(k,1)$ density is $\HCM$, it also follows that the class of
$\GGC$s provided by Theorem~\ref{maintheorem} is closed wrt
multiplication and division of independent rvs. However, if $
Z=Y\!\cdot\!X $ with $ Y \sim \mathrm{Gamma}(k,1)$ and  $ X \sim \HM_k, $ it is not true that $ Z^{-1} $ always has the same
representation.

\item Theorem~\ref{maintheorem} can also be expressed in the following way. Any
scale mixture of $\mathrm{Gamma}(k)$ distributions with a scale mixing
$\HM_k$-density is a $\GGC$. It is well known (\cite[Theorem 3.3, p.\
334]{SteutelHarn}) that any scale mixture of $\mathrm{Gamma}(1)$
distributions is infinitely divisible (ID). More generally, any
scale mixture of $\mathrm{Gamma}(2)$ distributions is ID
(\cite{Kristiansen}). However, for $ k>2 $ ID fails to hold in
general for such mixtures.



\end{enumerate}

\end{remarks}

There is a nice extension of Theorem~\ref{maintheorem} which we
see as a corollary of it.

\begin{corollary} \label{corollary}  Let $ Y \sim \mathrm{Gamma}(r,1) $ be independent of $ X \sim \HM_k $ where
$ r >0 $ and $ k $ is an integer such that $ k\ge r. $ Then $
Y\!\cdot\!X \sim \GGC$  and $ Y/X \sim \GGC$.
\end{corollary}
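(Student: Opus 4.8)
The plan is to deduce Corollary~\ref{corollary} from Theorem~\ref{maintheorem} by writing a $\mathrm{Gamma}(r,1)$ variable, with $r \le k$, as a product of a $\mathrm{Gamma}(k,1)$ variable and an independent factor whose density is $\HM_k$, so that the hypotheses of the theorem apply after absorbing that factor into $X$. Concretely, recall the classical beta--gamma algebra: if $Y_k \sim \mathrm{Gamma}(k,1)$ and $B \sim \mathrm{Beta}(r,k-r)$ are independent, then $Y_k \cdot B \sim \mathrm{Gamma}(r,1)$. So I would set $Y = Y_k \cdot B$ and then $Y \cdot X = Y_k \cdot (B \cdot X)$ and $Y/X = Y_k/(X/B) = Y_k \cdot (B \cdot X^{-1})$, with $Y_k$ independent of everything else. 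If one can show that $B \cdot X$ has an $\HM_k$ density and that $X/B$ has an $\HM_k$ density, then Theorem~\ref{maintheorem} applied with shape parameter $k$ immediately gives $Y_k \cdot (B\cdot X) \sim \GGC$ and $Y_k \cdot (B \cdot X^{-1}) \sim \GGC$, which is exactly the claim.

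So the real content reduces to: the $\mathrm{Beta}(r,k-r)$ density is $\HM_k$, and $\HM_k$ is preserved under multiplication and division by such a variable. For the first point, the $\mathrm{Beta}(r,k-r)$ density on $(0,1)$ is proportional to $x^{r-1}(1-x)^{k-r-1}$; this is of the form treated in the first Example of Section~\ref{secHCM} with $a=0$, $b=1$, $\alpha = r$, $\beta = k-r$, and that example asserts that when $a=0$ the density is $\HM_{[\beta]}$ regardless of $\alpha$. Since $k-r$ need not be an integer I would instead argue directly via closure: $x^{r-1}$ is $\HCM$ (hence $\HM_k$), and $(1-x)^{k-r-1}$ on $(0,1)$ should be shown to be $\HM_k$ — here I expect to need the representation \eqref{repr}, checking that $h(w) = f(uv)f(u/v)$ with $f(x)=(1-x)_+^{\gamma}$ has the right sign pattern of derivatives in $w$ up to order $k-1$; since $\gamma = k-r-1 \ge -1$, and in fact $\gamma$ can be bounded below by $-1$, one verifies $(1-uv)^\gamma(1-u/v)^\gamma$ is, as a function of $w=v+v^{-1}$ on the relevant range, a nonnegative combination of powers $(\lambda-w)_+^{k-1}$. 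Because $\HM_k$ is closed under multiplication (stated in Section~\ref{secHCM}), the product $x^{r-1}(1-x)^{k-r-1}$, suitably normalized, is then $\HM_k$. For the second point, Proposition~\ref{prop:HMkproducts} tells us that the product and ratio of two independent $\HM_k$ variables is again $\HM_k$ (and $X^{-1} \sim \HM_k$ whenever $X \sim \HM_k$), so $B \cdot X$ and $B \cdot X^{-1}$ are both $\HM_k$.

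I expect the main obstacle to be the verification that the truncated power density $(1-x)_+^{\gamma}$ with $\gamma = k-r-1 \in [-1,\infty)$ is $\HM_k$ — in particular handling the case $\gamma < 0$ (i.e.\ $r > k-1$), where the density blows up at $x=1$ and the first Example as literally stated (which requires $\beta \ge 1$) does not cover it. Here I would either invoke the approximation remark in Section~\ref{secHCM} (multiply by $e^{-\delta_1 x - \delta_2/x}$, approximate, and use closure of $\HM_k$ under weak limits of the corresponding distributions) or compute $h^{(j)}(w)$ directly on the interval $2 \le w < u + u^{-1}$ using the identity, noted in Example~\ref{exampleuniform}, that $d(\log v)/dw$ and related quantities are completely monotone; the combinatorial bookkeeping of signs of the $j$-th $w$-derivatives is routine but must be done carefully since everything hinges on $\gamma \ge -1$, which is precisely the place the hypothesis $k \ge r$ enters. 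Once that single $\HM_k$ membership is established, the rest of the argument is purely formal, chaining Proposition~\ref{prop:HMkproducts} and Theorem~\ref{maintheorem}.
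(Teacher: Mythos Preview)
Your strategy has a genuine gap: the $\mathrm{Beta}(r,k-r)$ density is \emph{not} $\HM_k$. By Example~1 of the paper (the case $a=0$), a density proportional to $x^{\alpha-1}(1-x)^{\beta-1}$ on $(0,1)$ is $\HM_{[\beta]}$, so with $\beta=k-r$ you only get $\HM_{[k-r]}$, and since $r>0$ this is strictly weaker than $\HM_k$. Concretely, take $k=2$, $r=1$: then $B\sim\mathrm{Beta}(1,1)=U(0,1)$, and $h(w)=f(uv)f(u/v)$ is the indicator $\mathbf{1}(w<u+u^{-1})$, which cannot be written in the form \eqref{repr} with $k=2$ (a step function is not an integral of $(\lambda-w)_+$ against a nonnegative measure). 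Worse, the paper explicitly warns right after Proposition~\ref{prop:HMkproducts} that if $X$ has the $\HM_2$ density $2(1-x)_+$ and $B\sim U(0,1)$, then $X/B\not\sim\HM_2$; this is precisely the product you would need in your argument for $k=2$, $r=1$. So absorbing $B$ into $X$ does not preserve $\HM_k$, and the chain breaks before you can invoke Theorem~\ref{maintheorem}.

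The paper's proof avoids this by working at the level of densities rather than factorizing the gamma variable. Writing $\alpha=k-r$ and $Y'\sim\mathrm{Gamma}(k,1)$, one computes directly that
\[
f_{Y/X}(z)\ \propto\ z^{-\alpha}\int_0^\infty x\,f_{Y'}(zx)\,x^{-\alpha}f_X(x)\,dx,
\]
so $f_{Y/X}(z)\propto z^{-\alpha}f_{Y'/X'}(z)$ where $X'$ has density proportional to $x^{-\alpha}f_X(x)$ (with a harmless $e^{-\delta/x}$ damping if needed for integrability). The point is that $x^{-\alpha}$ is $\HCM$, hence $\HM_k$, so $x^{-\alpha}f_X(x)$ \emph{is} $\HM_k$ by closure under multiplication of functions; Theorem~\ref{maintheorem} then gives $Y'/X'\sim\GGC$, and Proposition~\ref{LTtimesf} (division of a $\GGC$ density by a power of $x$) finishes. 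The moral: tilt the density of $X$ by a power, which stays inside $\HM_k$, rather than tilt $Y$ by a Beta factor, which does not.
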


\begin{proof}  Since $ X \sim \HM_k$ if and only if $1/X \sim \HM_k$, it suffices to consider the ratio $Z=Y/X.$ Let $
\alpha=k-r$ and let $ Y' \sim $ Gamma$(k,1).$ Then
$$ f_Z(z)=\int_0^\infty xf_Y(zx)f_X(x)dx = \frac{1}{\Gamma(r)}
\int_0^\infty x(zx)^{r-1}e^{-zx}f_X(x)dx  $$ $$ = \frac{\Gamma(k)}{\Gamma(r)}z^{-\alpha} \int_0^\infty x f_{Y'}(zx)x^{-\alpha}f_X(x)dx. $$ Here $ x^{-\alpha}f_X(x) $ is
$\HM_k$ and so is, for any $\delta\ge 0, \,
x^{-\alpha}e^{-\delta/x}f_X(x). $ Letting if necessary $ \delta
> 0 $ and normalizing this latter function to become the pdf of an
rv $ X',$ we get from Theorem~\ref{maintheorem} that $ Y'/X' \sim \GGC$. Using then  Proposition~\ref{LTtimesf} and letting $\delta
\rightarrow 0, $ we conclude that $ Y/X \sim \GGC$.
\end{proof}

\subsection{Proof of the main result}\label{secproof}

The proof of Theorem~\ref{maintheorem} is given in two parts.
First the case $ k=1 $ is treated. This proof is short but
contains the essential ideas. The proof in the general case
becomes more technical. Of course, we use the $\HCM$-characterization
of the LT of a $\GGC$ and hyperbolic substitutions in the proofs.
For the transformation $ T=t+t^{-1}, $ we avoid to use the
inverse transformation $ t= T/2 \pm \sqrt{T^2/4-1}.$ In fact, the
$\HCM$-concept was introduced in the early 1990s in order to avoid,
at least in presentations, such inverse transformations.

\medskip \noindent
{\it Proof of Theorem~\ref{maintheorem}, k=1}. It suffices to
consider the ratio $Y/X,$ where $ Y \sim \mathrm{Gamma}(1,1)$. The LT $
\phi(s) $ of the distribution of the ratio is given by, with $
f=f_X,$
$$ \phi(s) = E(\exp(-sY/X)) = \int_0^\infty E(\exp(-sY/x))f(x)dx
= \int_0^\infty \frac{x}{x+s}f(x)dx. $$ For fixed $ s>0,$ consider
$$ J=\phi(st)\phi(s/t) = \int_0^\infty \int_0^\infty \frac{xy}{(x+st)(y+ \,s/t)} f(x)f(y)dx dy. $$ In view of Proposition~\ref{prop:GGCLTisHCM}, we
only have to show that $ J $ is completely monotone (CM) wrt $
T=t+t^{-1}.$ We make the hyperbolic substitution $ x=uv, y=u/v $
with Jacobian with modulus $2u/v$. Hence
$$ J= \int_0^\infty  \int_0^\infty  \frac{2u}{v} \frac{u^2}{(uv+st)(u/v\, + \, s/t)}
f(uv)f(u/v) dudv. $$ Using the representation $ f(uv)f(u/v)=
\int_{[w,\infty)} H_u(d\lambda), $
where $ H_u(d\lambda) $ is a nonnegative measure and $
w=v+v^{-1}, $ letting $ b=b(\lambda)\ge 1 $ be such that $
b+b^{-1} =\lambda,$ letting $a=u/s, $ and changing the order of
integration, we get by some simple algebra that
$$ J=  \int_0^\infty \frac{2u^2}{s} \int_2^\infty \underbrace{\left(
\int_{1/b}^b \frac{t}{(v + \, t/a)(v+at)} dv\right)}_{=:J_1}
H_u(d\lambda)du. $$ It is now evident that it suffices to show
that for each $ b\ge 1 $ and each $ a>0 $ the interior
$v$-integral $J_1$ is CM wrt to $ T=t+t^{-1}.$ For $ b=1, $ $
J_1=0,$ so it suffices to consider the case $ b>1.$ The integral
$ J_1 $ is a function of $ T $ since the change $ t \mapsto
t^{-1} $ leaves  $ J_1 $ invariant which is shown by the
substitution $ v =1/v'.$ Now $ J_1 $ can be calculated
explicitly. In fact, by a partial fraction expansion we have for
$ a\ne 1, $
$$ \frac{t}{(v + \, t/a)(v+at)} = \frac{1}{a-a^{-1}} \left(\frac{1}{v + \, t/a} -
\frac{1}{v+at}\right) $$ and hence, for $ a\ne 1,$ by an integration
and some simplification,
$$ J_1 = \frac{1}{a-a^{-1}} \log \left( \frac{(t+ab)(t+ (ab)^{-1})}{(t +\, a/b)(t+\, b/a)}\right) = \frac{1}{a-a^{-1}} \log \frac{T+A}{T+B },$$  where $A=ab+ (ab)^{-1}, B= a/b+\, b/a.$ For $
a=1,$ $ J_1= (b-b^{-1})/(T+b+b^{-1}). $ Since $a\mapsto a^{-1}$ leaves $J_1$ unchanged, we may without restriction assume that $ a>1 $ (and as earlier $ b>1$), and then $ A>B $
and $ J_1
>0.$ Moreover, we get that the $k$-th derivative of $ J_1$, i.e.
here its first derivative, has the form
$$ \frac{dJ_1}{dT} = \frac{1}{a-a^{-1}} \left(\frac{1}{T+A}-\frac{1}{T+B}\right) = \frac{1}{a-a^{-1}}\: \frac{B-A}{(T+A)(T+B)}$$ and
this derivative is negative. Since $ (T+A)^{-1}(T+B)^{-1} $ is
CM, we get as desired that $ (-1)^j J_1^{(j)}(T) \ge 0, \:
j=0,1,2,\dots, $ and the proof is complete. $\Box $

\medskip
We now proceed with the general proof of Theorem~\ref{maintheorem} for any
integer $ k \ge 1.$ We shall see that the above proof needs some
complementary arguments.

\medskip \noindent
{\it Proof of Theorem~\ref{maintheorem}, general $k$}. Let $ Y \sim \mathrm{Gamma}(k,1)$
and $ X \sim \HM_k$ be independent. Then the LT $\phi(s) $ of
the distribution of $ Y/X $ is given by
$$ \phi(s)= \int_0^\infty \left(\frac{x}{x+s}\right)^k f(x)dx. $$
Hence using \eqref{repr}
\begin{eqnarray*}
  J&=& \phi(st)\phi(s/t) = \int_0^\infty \int_0^\infty \frac{x^ky^k}{(x+st)^k(y+ \,s/t)^k} f(x)f(y)dx dy\\
&=& \int_0^\infty  \int_0^\infty \frac{2u}{v}
\frac{u^{2k}}{(uv+st)^k (u/v + \, s/t)^k} f(uv)f(u/v)dudv \\
&=&\int_0^\infty  \int_0^\infty \frac{2u}{v}
\frac{u^{2k}}{(uv+st)^k (u/v + \, s/t)^k} \int_w^\infty
(\lambda-w)^{k-1}H_u(d\lambda) du dv,
\end{eqnarray*}
where $ w=v+v^{-1} $ and
$ H_u(d\lambda) $ is a nonnegative measure. Again we let $ b =
b(\lambda) \ge 1 $ be such that $ b+b^{-1}=\lambda $ and  put $
a=u/s. $ After a change of the order of integration with the $
v-$integral as the inner integral and noticing that $ b+b^{-1} -
v -v^{-1} = (b-v)(v-b^{-1})/v,$  we see by some algebraic
manipulations that it suffices to show that the integral
\begin{eqnarray} \label{J1}
J_k = \int_{1/b}^b I_k dv,\quad  \textrm{where}\quad I_k= \frac{t^k
((b-v)(v-b^{-1}))^{k-1}}{(v+\,t/a)^k (v+at)^k},
\end{eqnarray}
is CM wrt $ T=t+t^{-1}.$ The same argumentation as in the case $
k=1 $ shows that $ J_k $ is a function of $ T.$ An important fact
is that $ J_k $ can be calculated explicitly for all integers $ k
$ although $ J_k $ becomes complicated for $ k $ large. Since $
I_k $ is a rational function of $ v, $ we can get an expression
for $ J_k $  by using first a partial fraction expansion of $ I_k
$ wrt $ v. $ However, it is more efficient to use an alternating
generating function:
$$ GF(z)= \sum_{k=1}^\infty (-z)^{k-1}J_k =
\int_{1/b}^b \sum_{k=1}^\infty (-z)^{k-1} I_k dv $$ $$ =
\int_{1/b}^b \frac{1}{(v+\, t/a)(v+at)/t\,+\,z(b-v)(v-b^{-1})}dv.
$$ Minimizing over $v$ and $t,$ we see that the series is absolutely convergent at least if $|z|\le \frac{a}{(1+a)^2}\frac{(b-1)^2}{b}.$  Since the denominator in the integrand is a
quadratic function of $ v $ and as such a function can be
factorized into  two real linear factors for $ z\ge 0, $ we get by
integration and considerable simplification with the notation $
\alpha= a+a^{-1}, \beta = b+b^{-1}$ that
$$ GF(z)= \frac{1}{\sqrt{\Delta}}\log R, \quad \textrm{where} $$
$$ \Delta= (\alpha+ \beta z)^2-4-4z^2+4zT \quad \textrm{and} \quad R= \frac{T-2z +
\frac{1}{2}\beta (\alpha + \beta z) + \frac{1}{2}(b-b^{-1})\sqrt{\Delta}}{T-2z + \frac{1}{2}\beta (\alpha + \beta z) - \frac{1}{2}(b-b^{-1})\sqrt{\Delta}}\,. $$  It is far
from obvious but some calculation shows that
$$ \frac{d}{dz}\log R = \frac{2(b-b^{-1})}{\sqrt{\Delta}}. $$
In fact, the product of the numerator and the denominator in $ R
$ does not depend on $ z $ so the derivative above is just twice
the derivative of the logarithm of the numerator in $ R. $ Of
course, $ J_k= J_k(T)=\frac{(-1)^{k-1}}{(k-1)!} GF^{(k-1)}(0).$
Now it is  not difficult to see that  with, as earlier, $ A=
ab+(ab)^{-1} $ and $ B=a/b+b/a, $ we get
\begin{eqnarray} \label{ImpExp}
J_k(T)= P_k(T) + Q_k(T)\log\left(\frac{T + A}{T+B }\right),
\end{eqnarray}
where $ P_k(T) $ and $ Q_k(T) $ are polynomials in T of degrees $
k-2 $ and $ k-1, $ respectively. For $ k=1, \, P_k(T) $ vanishes.
For $ k=1,2,$ and 3, we have
$$\begin{array}{ccc}
  P_1(T)=0, &P_2(T)= -2\frac{b-b^{-1}}{(a-a^{-1})^2}, &P_3(T)=
-3 \frac{b-b^{-1}}{(a-a^{-1})^4} (2T + A+B),  \\
Q_1(T)= \frac{1}{a-a^{-1}}, &Q_2(T)= \frac{2T+A+B}{(a-a^{-1})^3},
&Q_3(T)= \frac{ 6T^2+ 6(A+B)T+(A+B)^2+2AB}{(a-a^{-1})^5}.
  \end{array}
$$
By using the above expressions for $ P_k(T) $ and $ Q_k(T) $ one
can easily verify that at least for $ k=1,2,3 $ we have somewhat
surprisingly
\begin{eqnarray} \label{dJ1dT} \frac{d^k\!J_k}{dT^k} = (-1)^k (k-1)! \frac{(b-b^{-1})^{2k-1}}{(T+A)^k(T+B)^k} .\end{eqnarray}
To see that (\ref{dJ1dT}) is
completely general, some additional argumentation is needed.
Since $ P_k(T) $ has degree $k-2,$ it has no influence at all on
the $k$-th derivative of $ J_k. $ Since $ Q_k(T) $ has degree $
k-1 $ and hence $ Q_k^{(k)}(T) \equiv 0, $ it also follows from
(\ref{ImpExp}) after some reflection that
\begin{eqnarray} \label{dJ1dTMORE} \frac{d^k\!J_k}{dT^k} = \frac{R_k(T)}{(T+A)^k (T+B)^k},
\end{eqnarray}  where $ R_k(T) $ is a polynomial of degree at
most $2k-1.$  To see that really $ R_k(T) $ is a constant, $
(-1)^k (k-1)!(b-b^{-1})^{2k-1}, $ we look at the case when $ t
\rightarrow  \infty. $ Then $ T = t+t^{-1} $ is very close to $
t.$ From (\ref{J1}) we get that
$$ J_k \sim \frac{1}{t^k}
\int_{1/b}^b \big((b-v)(v-b^{-1})\big)^{k-1}dv \; \sim \;
B(k,k)\frac{(b-b^{-1})^{2k-1}}{T^k} \; \: \textrm{as} \,\; t
\rightarrow \infty, $$ where $ B(k,k) = (k-1)!(k-1)!/(2k-1)!$.
Since $ k(k+1)\cdots(2k-1)B(k,k)= (k-1)!, $ we also get that
$$ \frac{d^k\!J_k}{dT^k} \; \sim \; (-1)^k(k-1)! \frac{(b-b^{-1})^{2k-1}}{T^{2k}}
\;\;
\textrm{as}\;\: t \rightarrow \infty. $$
 Since $ R_k(T) $ in
(\ref{dJ1dTMORE}) is a polynomial, this asymptotic relation can
only hold when the polynomial is a constant and hence
(\ref{dJ1dT}) holds for all $ k$. To complete the proof, we use
that $(T+A)^{-k}(T+B)^{-k} $ is CM and hence $(-1)^{j}J_k^{(j)}(T)
\ge 0 $ for $ j=k,k+1,\dots.$ Then it only remains to verify that
these inequalities also hold for $ j=0,1,\dots,k-1.$ Using the
same argumentation as above we have that, for each $ j \ge 0,$  $
J_k^{(j)}(T) = O(T^{-k-j})$ as $ T \rightarrow \infty. $ In
particular $ J_k^{(j)}(T) $ vanishes at $ T=\infty. $ It follows
that
$$ J_k^{(j)}(T) = -\int_T^\infty J_k^{(j+1)}(\tilde T)\,d\tilde T, \;\; j=0,1,2,\dots $$ We see
that the sign of $ J_k^{(k-1)}(T) $ is opposite to that of $
J_k^{(k)}(T).$ The same then holds for the sign of
$J_k^{(k-2)}(T) $ compared with that of $ J_k^{(k-1)}(T), $ etc.
This shows that $ J_k(T) $  is CM wrt $ T $ as desired. $\Box$

\medskip
\begin{remark}\rm In the general case some technical
details have been omitted in the proof. However, it is easy to
check all statements by using a program for symbolic algebra. In
fact, the simple form in \eqref{dJ1dT} for the derivative $
J_k^{(k)}(T) $ was discovered in that way.
\end{remark}

\section{Applications}\label{secappl}

Theorem~\ref{maintheorem} and Corollary \ref{corollary} have a
wide range of possible applications. We will discuss a few in
this section.

\subsection{Excursion theory}

The random process foundations for the research carried out in this article have
been laid by Roynette et al.\ \cite{RoynetteValloisYor} and
Salminen et al.\ \cite{SalminenValloisYor}. In these articles the
authors study excursion times of recurrent linear diffusions on
$\RR_+$. More precisely, given an $\RR_+$-valued recurrent
diffusion $(X_t)_{t\geq 0}$ and defining the last and the next
visit in $0$ via
$$g_t:=\sup \{s\leq t, X_s =0 \}, \quad d_t:=\inf \{s\geq t, X_s =0 \}$$
they are interested in the rvs
\begin{equation}\label{extimes}
 Y^{(1)}_p= Z_p- g_{Z_p}, \quad Y^{(2)}_p= d_{Z_p}-Z_p, \quad Y^{(3)}_p= d_{Z_p}- g_{Z_p},
\end{equation}
where $Z_p$ denotes an exponential rv with density $ p e^{-pz},
z>0,$ independent of $(X_t)_{t\geq 0}$. In
\cite{SalminenValloisYor} it is shown, that all $Y^{(i)}$ are
infinitely divisible, while in \cite{RoynetteValloisYor} the
authors give conditions for $Y^{(i)}$ to have $\GGC$ distributions.
These conditions are stated in terms of the Krein measure of the
L\'evy measure of the inverse local time at $0$ of $(X_t)_{t\geq
0}$.

For their proof of the $\GGC$ property, Roynette et al.\ first show,
for $k=1,$ a reformulation of Theorem \ref{maintheorem}
(\cite[Theorem 2]{RoynetteValloisYor}). They do not use the
HM-concept but define a class $\cC$ of functions which
essentially coincides with the class $\HM_1$. The proof of
\cite[Theorem 2]{RoynetteValloisYor} then relies on the
$\HCM$-characterization of the LT of a $\GGC$
(Proposition~\ref{prop:GGCLTisHCM}). Although also our proof for
$ k=1 $ uses Proposition~\ref{prop:GGCLTisHCM} it is shorter than
theirs because of our use of a suitable hyperbolic substitution
in a double integral and the avoidance of certain inverse
transformations.

Further in \cite{RoynetteValloisYor} the LTs of the $Y^{(i)}$s
are shown to be Stieltjes transforms of measures whose densities are $\HM_1$ (compare with
Remark~\ref{remarksmain}(ii)).

\medskip
In the following we will indicate via an example how one can also
use our main theorem in the case $k=2$ to prove the $\GGC$ property
of $Y^{(3)}_p$ as defined in \eqref{extimes}. Therefore we
briefly recall some notation from \cite{SalminenValloisYor} and
\cite{RoynetteValloisYor}.

 Let $(L_t)_{t\geq 0}$ be the
continuous local-time of $(X_t)_{t\geq 0}$ at $0$ and
$(\tau_u)_{u\geq 0}$ its right-continuous inverse. Then
$(\tau_u)_{u\geq 0}$ is a subordinator and as such has a L\'evy
exponent $\psi$ and a L\'evy density $\nu$, i.e.
$$E[e^{-\lambda \tau_u}]= e^{-u\psi(\lambda)}= \exp \left(-u \int_0^\infty (1-e^{-\lambda x}) \nu(x) dx \right) $$
where further $\nu$ has the Krein representation
$$ \nu(x)=\int_0^\infty e^{-xz} K(dz)$$
with Krein measure $K$ of $\nu$.

\begin{proposition}
Assume that the Krein measure $K$ is such that the function $f(u)$ defined via
$$f(u^{-1})=\int_{(u-p)\vee 0}^{u} K(dz)$$
is an $\HM_2$ function. Then $Y_p^{(3)} \sim \GGC$.
\end{proposition}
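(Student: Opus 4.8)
\emph{Sketch of a proof.} The plan is to recognise $Y^{(3)}_p$, in distribution, as a product $Y\cdot X$ with $Y\sim\mathrm{Gamma}(2,1)$ and $X$ an independent rv whose density is $\HM_2$, and then to invoke Theorem~\ref{maintheorem} in the case $k=2$. The starting point is the explicit law of the length of the straddling excursion. With $\psi$ and $\nu$ as above, excursion theory --- concretely the compensation formula applied to the Poisson point process of jumps of $(\tau_u)$, together with the fact that $\tau_{u-}$ and $\tau_u$ have the same law --- gives, when $(\tau_u)$ has no drift,
$$
P\bigl(Y^{(3)}_p\in dx\bigr)=\frac{1}{\psi(p)}\,(1-e^{-px})\,\nu(x)\,dx,\qquad x>0.
$$
This is the representation used in \cite{SalminenValloisYor,RoynetteValloisYor}. (For a reflecting recurrent diffusion the zero set is Lebesgue-null, so the no-drift case is the relevant one; were there a drift, $Y^{(3)}_p$ would have an atom at $0$ and could not be a $\GGC$.)

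Into this I would insert the Krein representation $\nu(x)=\int_0^\infty e^{-xz}K(dz)$ and use the elementary identity $(1-e^{-px})e^{-xz}=x\int_z^{z+p}e^{-xw}\,dw$. An application of Fubini's theorem then yields
$$
P\bigl(Y^{(3)}_p\in dx\bigr)=\frac{x}{\psi(p)}\Bigl(\int_0^\infty e^{-xw}F(w)\,dw\Bigr)dx,\qquad F(w):=\int_{(w-p)\vee 0}^{w}K(dz)=f(w^{-1}),
$$
and the substitution $w=1/v$ turns this into $P(Y^{(3)}_p\in dx)=\dfrac{x}{\psi(p)}\bigl(\int_0^\infty v^{-2}e^{-x/v}f(v)\,dv\bigr)\,dx$. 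Integrating over $x$ and using $\int_0^\infty x\,e^{-x/v}\,dx=v^2$ shows that $\psi(p)=\int_0^\infty f(v)\,dv<\infty$; hence $g:=f/\psi(p)$ is a probability density, and it is $\HM_2$ since $f$ is (a positive multiple of an $\HM_2$ function is $\HM_2$).

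It then only remains to observe that, for $Y\sim\mathrm{Gamma}(2,1)$, the rv $vY$ has density $v^{-1}f_Y(x/v)=x\,v^{-2}e^{-x/v}$ at the point $x$, so that
$$
\frac{x}{\psi(p)}\int_0^\infty v^{-2}e^{-x/v}f(v)\,dv=\int_0^\infty \bigl(x\,v^{-2}e^{-x/v}\bigr)\,g(v)\,dv
$$
is exactly the density of $Y\cdot X$, with $X$ independent of $Y$ and having density $g$. Thus $Y^{(3)}_p\stackrel{d}{=}Y\cdot X$ with $X\sim\HM_2$, and Theorem~\ref{maintheorem} with $k=2$ yields $Y^{(3)}_p\sim\GGC$. (Since $X\sim\HM_2\Leftrightarrow X^{-1}\sim\HM_2$, one could equivalently record the identity in law as $Y^{(3)}_p\stackrel{d}{=}Y/X'$ with $X'=1/X\sim\HM_2$.)

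The only genuinely non-routine point is the first one: extracting the explicit law of $Y^{(3)}_p$ from the excursion-theoretic framework of \cite{SalminenValloisYor,RoynetteValloisYor} and making explicit the standing no-drift assumption on $(\tau_u)$. Everything afterwards --- the identity for $(1-e^{-px})e^{-xz}$, the two Fubini exchanges, the finiteness $\int_0^\infty f<\infty$, and the recognition of the product density --- is routine bookkeeping, legitimate precisely because $f$ is assumed to be an honest $\HM_2$ density and hence has controlled behaviour at $0$ and $\infty$ (e.g.\ $f(v)\le Cv^{\gamma}$).
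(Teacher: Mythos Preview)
Your argument is correct and follows the same route as the paper: one recognises $Y^{(3)}_p$ as a $\mathrm{Gamma}(2)$ scale mixture with mixing density proportional to $f$, and then applies Theorem~\ref{maintheorem} with $k=2$. The only difference is cosmetic: the paper quotes the mixture formula $f_{Y^{(3)}_p}(u)=\frac{1}{\psi(p)}\int_0^\infty u\,e^{-ux}\!\int_{x-p}^{x}K(dz)\,dx$ directly from \cite[Equations (46), (49), (50)]{SalminenValloisYor}, whereas you rederive it from the more primitive density $\frac{1}{\psi(p)}(1-e^{-px})\nu(x)$ via the identity $(1-e^{-px})e^{-xz}=x\int_z^{z+p}e^{-xw}\,dw$ and Fubini.
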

\begin{proof}
It was shown in \cite{SalminenValloisYor}, that the distribution
of  $Y^{(3)}_p$ is a $\mathrm{Gamma}(2)$-mixture. In particular it can be
deduced from \cite[Equations (46), (49) and
(50)]{SalminenValloisYor} that the density of $Y^{(3)}_p$ is
given by
$$f_{Y^{(3)}_p}(u)= \frac{1}{\psi(p)} \int_0^\infty  u e^{-ux}  \int_{x-p}^x K(dz)  dx$$
which shows that $Y^{(3)}_p=Y\cdot X$ where $Y\sim \mathrm{Gamma}(2,1)$ and $X$ is independent of $Y$ with density $f_X$ defined via $f_X(u^{-1}) = \frac{1}{\psi(p)} \int_{u-p}^u K(dz).$ 
Thus the claim follows from Theorem \ref{maintheorem} in the case $k=2$.
\end{proof}

\subsection{Exponential functionals of L\'evy processes}

Let $\xi=(\xi_t)_{t\geq 0}$ be a L\'evy process such that
$\xi_t\to -\infty$ as $t\to \infty$. Then the exponential
functional of $\xi$ is defined as
$$I_\xi:=\int_{(0,\infty)} e^{\xi_t}dt.$$
Such exponential functionals appear as stationary distributions
of generalized Ornstein-Uhlenbeck processes and they have
attracted a lot of interest throughout the last years (see e.g.\
\cite{carmonapetityor}, the survey paper \cite{bertoinyor} or the
more recent contributions \cite{BehmeLindner,
BehmeMaejimaMatsuiSakuma, PardoPatieSavov} to name just very few
references).

It is known, that $I_\xi\sim \GGC$ in several cases. E.g.\ Dufresne
(e.g.\ \cite[Equation (16)]{bertoinyor}) showed that $I_\xi
\overset{d}= \frac{2}{\sigma^2} G^{-1}_{2a/\sigma^2}$ where
$G_\gamma\sim\mathrm{Gamma}(\gamma,1)$, whenever $\xi$ is a Brownian
motion with variance $\sigma^2$ and drift $a<0$. Concerning
processes $\xi$ with jumps, one has for example the following
Proposition.

\begin{proposition}
 Suppose that $\xi$ is a compound Poisson process, i.e. $\xi_t=\sum_{i=1}^{N_t}X_i$
with i.i.d. jump heights $X_i,\ i=1,2,\ldots$, such that $-\infty<E[X_1]<0$ and $e^{X_1}\sim \GGC$. Then $I_\xi \sim \GGC$.
\end{proposition}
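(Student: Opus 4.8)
The plan is to exploit the regeneration of a compound Poisson path at its first jump, which rewrites the law of $I_\xi$ as a weak limit of iterates of an affine map that manifestly preserves the $\GGC$ class. First I would record the pathwise representation. Let $\lambda$ be the intensity of $(N_t)$, let $0=T_0<T_1<T_2<\cdots$ be its jump times and $\tau_n=T_{n+1}-T_n$ the interarrival times, and set $S_n=X_1+\cdots+X_n$ with $S_0=0$. Since $\xi$ is constant equal to $S_n$ on $[T_n,T_{n+1})$,
$$ I_\xi=\int_{(0,\infty)}e^{\xi_t}\,dt=\sum_{n=0}^\infty e^{S_n}\tau_n, $$
where $(\tau_n)_{n\ge0}$ are i.i.d.\ $\mathrm{Exp}(\lambda)$, $(X_n)_{n\ge1}$ are the i.i.d.\ jump heights, and the two sequences are independent. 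Under the hypotheses one has $\xi_t/t\to\lambda E[X_1]<0$ a.s., so $\xi_t\to-\infty$, the series converges a.s., and $I_\xi<\infty$ a.s.

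Next I would introduce the partial sums $I^{(n)}:=\sum_{j=0}^{n-1}e^{S_j}\tau_j$. Since these increase to $I_\xi$ almost surely, hence in distribution, and since the $\GGC$ class is closed under weak limits, it suffices to show $I^{(n)}\sim\GGC$ for every $n\ge1$. The crucial point is a distributional recursion: factoring $e^{X_1}$ out of the terms with $j\ge1$ via $e^{S_j}=e^{X_1}e^{X_2+\cdots+X_j}$ and relabelling, one finds
$$ I^{(n)}\;\overset{d}{=}\;\tau_0+e^{X_1}\,\widehat I^{(n-1)}, $$
where $\widehat I^{(n-1)}$ is a copy of $I^{(n-1)}$ that is independent of the pair $(\tau_0,X_1)$, and $\tau_0$ is independent of $X_1$; this uses only that $(\tau_1,\tau_2,\dots)$ and $(X_2,X_3,\dots)$ are independent of $(\tau_0,X_1)$ and equidistributed with $(\tau_0,\tau_1,\dots)$ and $(X_1,X_2,\dots)$. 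Now induct on $n$. For $n=1$, $I^{(1)}=\tau_0\sim\mathrm{Gamma}(1,\lambda)$, whose density is $\HCM$, so $I^{(1)}\sim\GGC$ by Proposition~\ref{prop:HCMinGGC}. For the inductive step, assume $I^{(n-1)}\sim\GGC$; then $e^{X_1}\sim\GGC$ by hypothesis and $\widehat I^{(n-1)}\sim\GGC$, so the independent product $e^{X_1}\widehat I^{(n-1)}$ is a $\GGC$ by Proposition~\ref{prop:GGCproducts}, and since the $\GGC$ class is closed under convolution of independent rvs, $I^{(n)}=\tau_0+e^{X_1}\widehat I^{(n-1)}\sim\GGC$. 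Letting $n\to\infty$ then gives $I_\xi\sim\GGC$.

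The one genuinely delicate feature is that the summands $e^{S_n}\tau_n$ of $I_\xi$ are \emph{not} independent, since they share the jump variables $X_i$; hence one cannot invoke closure of $\GGC$ under convolution directly for $I_\xi$. The regenerative recursion is exactly what removes this difficulty, reducing each step to a single independent product (handled by Proposition~\ref{prop:GGCproducts}) followed by a single independent convolution. The remaining points — the independence and measurability statements behind the recursion, and the a.s.\ finiteness of $I_\xi$ — are routine and I would dispatch them briefly. I note that this argument does not invoke Theorem~\ref{maintheorem}; it relies only on the closure properties of $\GGC$ together with the product theorem, Proposition~\ref{prop:GGCproducts}.
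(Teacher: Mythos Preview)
Your argument is correct and is precisely the approach the paper has in mind: the paper's proof is a one-line pointer to \cite[Proposition 3.2]{BehmeMaejimaMatsuiSakuma} combined with Proposition~\ref{prop:GGCproducts}, and you have simply spelled out that perpetuity/regeneration scheme in detail, using the recursion $I^{(n)}\overset{d}{=}\tau_0+e^{X_1}\widehat I^{(n-1)}$, the product closure Proposition~\ref{prop:GGCproducts}, convolution closure, and weak-limit closure of the $\GGC$ class.
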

\begin{proof}
 The proof can be carried out along the lines of the proof of \cite[Proposition 3.2]{BehmeMaejimaMatsuiSakuma} using the more recent Proposition \ref{prop:GGCproducts}.
\end{proof}

Still, assuming that $\xi_t=at-N_t, t\geq 0,$ for $a<0$ and a subordinator (i.e. a nondecreasing L\'evy process) $(N_t)_{t\geq 0}$, one easily observes that $I_\xi$ has bounded support and therefore cannot be infinitely divisible such that in particular $I_\xi \not\sim \GGC$.

In \cite{PardoPatieSavov}, based on the Wiener-Hopf factorization
of L\'evy processes, the authors obtain factorizations of exponential
functionals. In particular, in case of a spectrally negative process $\xi$
with $\xi_t \to -\infty$, they prove that
$$I_\xi \overset{d}= \frac{I_H}{G_\gamma},$$
where  $H = (H_t)_{t\geq 0}$ is the descending ladder height
process of $\xi$ and $G_\gamma\sim \mathrm{Gamma}(\gamma,1)$, with
$\gamma$ depending on the characteristics of $\xi,$ is independent
of $H$. We refer to \cite{Sato} or \cite{Bertoin} for any further
information on L\'evy processes, their characteristics and their
Wiener-Hopf factorizations.

\medskip
Since $-H=(-H_t)_{t\geq 0}$ is a subordinator with drift $a_H$
and L\'evy jump measure $\nu_H$, say, it follows from \cite[Example
B]{carmonapetityor} that if $H$ is non-trivial, then $I_H$ admits
a density $f(s)$ which fulfills the integro-differential equation
$$(1-a_H s)f(s)= \int_s^\infty \bar{\nu}_H(\log(t/s))f(t) dt,$$
where $\bar{\nu}_H(x)=\nu_H((x, \infty))$.
In particular, if $\bar{\nu}_H(s)=ce^{-bs}$, $b,c>0$, and
$a_H>0$, the authors prove that
$$I_H \overset{d}= \frac{1}{a_H} Z_{b+1, c/a_H},$$
where $Z_{\alpha,\beta}$ is a Beta rv on $(0,1)$ with parameters
$\alpha,\beta>0$. Hence $I_H\sim\HM_k$ for $k\leq \min([b+1],
[c/a_H])$.

Now by Corollary~\ref{corollary}, $I_\xi$ is the reciprocal of a
$\GGC$ if $k\geq \gamma$. Notice that in general such inverses of $\GGC$s are not $\GGC$s themselves. However, in this case $I_\xi\sim \HM_k$, since $G_\gamma\sim \HCM$. 

Conversely,  if again $\bar{\nu}_H(s)=ce^{-bs}$, $b,c>0$, but $a_H=0$, then $I_H$ itself is Gamma distributed and so $I_\xi\sim \GGC$.

\subsection{Constructing $\GGC$s}

Using Theorem \ref{maintheorem}  we can construct explicit densities and LTs of $\GGC$s as we shall do in the following.

\begin{examples} \rm
\begin{enumerate}
 \item Let $ Y \sim \mathrm{Gamma}(1,1)$  and $ X = U \sim U(0,1)$ be independent.
 Then we have the following LTs  and pdfs for $ YU $
and  $Y/U$, respectively:
\begin{eqnarray*}
 \phi_{YU}(s) = \frac{\log(1+s)}{s},& \quad& \phi_{Y/U}(s)  = 1+ s\log(\frac{s}{1+s}),\\
f_{YU}(x)= Ei(x)=
\int_1^\infty y^{-1}e^{-yx}dy,& \quad &f_{Y/U}(x)= \frac{1}{x^2}(1-(1+x)e^{-x}).
\end{eqnarray*}
By Example \ref{exampleuniform} we have $U\sim \HM_1$ and hence by Theorem~\ref{maintheorem} the above pdfs are $\GGC$s and the LTs are $\HCM$.
\item Now let $ Y \sim \mathrm{Gamma}(2,1)$ and $X=\min(U_1,U_2),$ with $ U_1, U_2 \sim U(0,1)$ independent and independent of $Y$. Then $ f_X(x)=2(1-x), 0<x<1,$ which belongs to $\HM_2$.
We can also represent $ X $ as $ X \overset{d}= U_1U_2^{1/2}.$ We
get the following LTs and pdfs:
\begin{eqnarray*}
  \phi_{YX}(s)= \frac{2}{s}(1-\frac{\log(1+s)}{s}), &\quad&\phi_{Y/X}(s)= 1+6s+
(6s^2+4s)\log(\frac{s}{1+s}),\\
 f_{YX}(x)=2e^{-x} - 2xEi(x), &\quad & f_{Y/X}(x)= \frac{1}{x^3}(-12+4x+(2x^2+8x+12)e^{-x}).
\end{eqnarray*}
Again by Theorem~\ref{maintheorem} the pdfs
are $\GGC$s and the LTs are $\HCM$.
\end{enumerate}

\end{examples}

Many similar examples can be obtained from
Corollary~\ref{corollary} by letting $ Y \sim $ Gamma$(r,1)$ with
a real $ r. $ 

\section{Final comments}\label{secfinal}

There are reasons to believe that Theorem~\ref{maintheorem} (as
well as Proposition \ref{prop:HMkproducts}) can be extended to
cover the case that $ k $ is any real number $ \ge 1. $ Maybe it
can even be extended to all real $ k>0.$  As a definition of an
$\HM_k $ function in the real case, the integral representation
(\ref{repr}) can be used.  For any real $j$ and $k$ such that $ 0
< j< k $ we have $\HM_k \subset \HM_j.$ To see this, one can use
(\ref{repr}) together with the simple formula
$$ (\lambda-w)^{k-1} = \frac{1}{B(j,k-j)} \int_w^\lambda
(\tilde \lambda-w)^{j-1} (\lambda-\tilde \lambda)^{k-j-1} d\tilde
\lambda\,.  $$  For $ k\ge 1 $ the $\HM_k$ class is closed wrt
multiplication of functions. However, it is not closed for $k<1$
which the example $ f(x)=(1-x)^{-1/2} $ illustrates. The
technique which we have used to prove Theorem~\ref{maintheorem}
for integers $ k $ cannot be applied in the general real case
since it much depends on an explicit calculation of the integral
$ J_k $ in (\ref{J1}). However, numerical experiments indicate
that $ J_k $ is CM as a function of $ T=t+t^{-1} $ for all $ k>0.$
An important problem for the future is to prove that so is the
case. 

Let $ \cal A $ and $ \cal B $ denote classes of probability
distributions. We denote by ${\cal A \times \cal B}$ the class of
distributions generated by $ Y\!\cdot\!X $ for $ Y \sim \cal A $
and $ X \sim \cal B $ with $ Y $ and $ X $ independent.
Theorem~\ref{maintheorem} and Proposition~\ref{prop:GGCproducts}
can then be formulated as  $ \textrm{Gamma}(k) \times
\HM_k \subseteq \GGC$ \, and \, $\GGC
\times \GGC \subseteq \GGC,$ respectively.

One may wonder about the largest class $ {\cal H}_k $ such that $
\textrm{Gamma}(k) \times {\cal H}_k \subset \GGC. $
Apparently, because of  Theorem~\ref{maintheorem}  and
Proposition~\ref{prop:GGCproducts}, $ {\cal H}_k \supset
 \HM_k \times \GGC.$ One may also wonder about
the largest class $ {\cal G}_k $ such that ${\cal G}_k \times
\HM_k \subseteq \GGC.$  Of course, $ {\cal G}_k
\supseteq \textrm{Gamma}(k) \times \GGC. $ Could possibly
$  {\cal G}_k  \supset   \GGC(k) , $ where $
\GGC(k) $ denotes all $\GGC$s with left-extremity 0 and
total $ U$-measure at most $ k?$ To prove this possible result,
it suffices to show that $ Y \cdot X \sim \GGC$ when $ X
\sim \textrm{HM}_k $ and $ Y $ is a finite sum of independent
gamma variables with a shape parameter sum not exceeding $ k$ and varying scale parameters.

\section*{Acknowledgements}

This collaboration started when L. Bondesson was visiting A.
Behme at Technische Universit\"at Dresden. Both authors thank the
institute in Dresden for hospitality and financial support. T.\
Sj\"odin in Ume\aa \: is thanked for great interest in this work. Further, an anonymous referee is thanked for his/her effort.

\parindent =0.0cm

\end{document}